\newtheorem{theorem}{Theorem}
\newtheorem{proposition}{Proposition}
\newtheorem{remark}{Remark}
\begin{document}


\title{Equimeasurable symmetric spaces of measurable function}
\author{M.~A.~Muratov\\ V.I.Vernadsky Crimean Federal University, 
             Simferopol,  Russian Federation, \\ B.-Z.~A.~Rubstein\\ Ben Gurion University of the Negev,
             Be'er Sheva, Israel}
\maketitle

\begin{abstract}
In this paper we consider equimeasurable symmetric(rearrangement invariant) spaces $\mathbf{E}_1 = \mathbf{E}_1(\Omega_1,\mathcal{F}_1,\mu_1)$ and $\mathbf{E}_2 = \mathbf{E}_2(\Omega_2,\mathcal{F}_2,\mu_2)$ on a measure spaces $(\Omega_1, \mathcal{F}_1,\mu_1)$ and   $(\Omega_2,\mathcal{F}_2,\mu_2)$ with finite or infinite $\sigma$-finite non-atomic measures $\mu_1$ and $\mu_2$.

      If  $\mathbf{E}_1(\Omega_1,\mathcal{F}_1,\mu_1)$ be a symmetric space on a measure spaces $(\Omega_1, \mathcal{F}_1,\mu_1)$ and  $(\Omega_2,\mathcal{F}_2,\mu_2)$
  be a measure space such that $\mu_1 (\Omega_1)=\mu_2(\Omega_2)$, 
      then there exists a unique symmetric space $\mathbf{E}_2(\Omega_2,\mathcal{F}_2,\mu_2)$ on $(\Omega_2,\mathcal{F}_2,\mu_2)$, which is
  equimeasurable to $ \mathbf{E}_1(\Omega_1,\mathcal{F}_1,\mu_1)$.
  
\medskip

Keywords: Symmetric  spaces,  Banach and quasi-Banach spaces,  Lebesgue spaces, equimeasurable symmetric spaces, Stone representation.

\medskip

 2010 MSC:  46E30, 46E35, 26D10, 26D15, 46B70.
\end{abstract}

\section{Introduction}
\label{intro}

In this paper we consider symmetric (rearrangement invariant) Banach or quasi-Banach spaces $\mathbf{E}=\mathbf{E}(\Omega,\mathcal{F}, \mu)$ of measurable functions on various
 measure spaces $(\Omega,\mathcal{F}, \mu)$. See, \cite{LTz}, \cite{KrPiSe}, \cite{BeSh}, \cite{RuGrMuPa} and references therein.
      The terminology related to quasi-Banach spaces is taken from \cite{Kal}.

     The measure space, here and throughout, are assumed to have finite or infinite $\sigma$-finite and non-atomic measures $\mu$.

     Most of the works dealing with such symmetric spaces, are beginning with the following phrase:
\begin{itemize}
      \item[$\bullet$]
    Let us assume, without loss of generality, that the considered measure space $\Omega$ is the half-line $\mathbf{R}_+ = [0,\infty)$ or its finite segment
 $[0,a], \ 0<a<\infty$,  with the corresponding Lebesgue measure.
\end{itemize}

    The main aim of this our paper is {\it to give precise meaning and justify} the above assumption.

    Let two functions $f_1$ and $f_2$ on (possibly different) measure spaces $(\Omega_1,\mathcal{F}_1,\mu_1)$ and $(\Omega_2,\mathcal{F}_2,\mu_2)$
 are called {\it equimeasurable} if $\xi_{f_1,\mu_1} =\xi_{f_2,\mu_2}$, where the decreasing rearrangement $\xi_{f,\mu}$ of a
 measurable function $f$ on $(\Omega,\mathcal{F}, \mu)$ is defined by
$$
  \xi_{f,\mu}(x) = \inf \{y \in [0,+\infty) \colon \ \mu\,(\{|f| > y \}) \leq x \} \;,\; x \in [0,\infty),
$$
see,  Section  \ref{ss Symmetric spaces} below.

Two symmetric spaces $\mathbf{E}_1 = \mathbf{E}_1(\Omega_1,\mathcal{F}_1,\mu_1)$ and $\mathbf{E}_2 = \mathbf{E}_2(\Omega_2,\mathcal{F}_2,\mu_2)$ will be called {\it equimeasurable}
 if they have the same collections of decreasing rearrangements,
 $$
     \{ \xi_{f_1,\mu_1}\colon \ f_1 \in \mathbf{E}_1\} =  \{ \xi_{f_2,\mu_2}\colon \ f_1 \in \mathbf{E}_2\}\;.
 $$

    Our main result can formulated now as follows:
\begin{itemize}
           \item[$\bullet$]
      Let $\mathbf{E}_1(\Omega_1,\mathcal{F}_1,\mu_1)$ be a symmetric space on a measure spaces $(\Omega_1, \mathcal{F}_1,\mu_1)$ and let  $(\Omega_2,\mathcal{F}_2,\mu_2)$
  be a measure space such that $\mu_1 (\Omega_1)=\mu_2(\Omega_2)$.
      Then there exists a unique symmetric space $\mathbf{E}_2(\Omega_2,\mathcal{F}_2,\mu_2)$ on $(\Omega_2,\mathcal{F}_2,\mu_2)$, which is
  equimeasurable to $ \mathbf{E}_1(\Omega_1,\mathcal{F}_1,\mu_1)$.
    \end{itemize}

     We say that a measure space $(\Omega,\mathcal{F}, \mu)$ is {\it standard} if $(\Omega,\mathcal{F}, \mu) = (I, \mathcal{B}_m, m)$,
 where either $I=[0, \infty)$ or  $I=[0, a]$ with $0< a < \infty $, while $m$ is the usual Lebesgue measure on $I$ and
 $\mathcal{B}_m$ is the $m$-completion of the Borel $\sigma$-algebra $\mathcal{B} =\mathcal{B}(I)$ with respect to $m$.

     Then the above assertion may be reformulated as follows:
\begin{itemize}
 \item[$\bullet$]
      Let $ \mathbf{E}(\Omega,\mathcal{F},\mu)$ be a symmetric space on a measure space  $(\Omega,\mathcal{F},\mu)$, and $(I, \mathcal{B}_m,  m)$ be he standard measure
 space such that $\mu(\Omega)= m(I)$.
      Then there exists a unique (standard) symmetric space $\mathbf{E}(I, \mathcal{B}_m, m)$, which is equimeasurable to $ \mathbf{E}(\Omega,\mathcal{F},\mu)$.
\end{itemize}

     The central result of the paper (Theorems \ref{th main theorem 1} and \ref{th main theorem 2}) ares formulated and proved in
 Sections \ref{s Main results}, \ref{s Symmetric spaces on separable measure spaces} and
 \ref{s Symmetric spaces on general measure spaces}.

      In Section \ref{s Symmetric spaces on separable measure spaces} we consider a special case when the measure space $(\Omega,\mathcal{F}, \mu)$
 is separable.

      Namely, let \ $(\Omega, \mathcal{F},\mu)$ \ be a separable $\sigma$-finite non-atomic measure space and \
 $(I,\mathcal{B}_m, m)$ be the corresponding standard measure space.
      Then Theorem \ref{th isomorphism Phi Separable case} asserts:
\begin{itemize}
 \item[$\bullet$]
     There exists an algebraic, order, topological and integral preserving isomorphism
$$
 \Phi  \colon \mathbf{L}_0(\Omega,\mathcal{F},\mu) \to   \mathbf{L}_0(I,\mathcal{B}_m, m)
$$
 between corresponding spaces of all real measurable functions on $(\Omega,\mathcal{F},\mu)$ and $(I,\mathcal{B}_m, m)$, respectively.
 \item[$\bullet$]
    For every symmetric space $\mathbf{E}(\Omega,\mathcal{F},\mu)\subset \mathbf{L}_0(\Omega,\mathcal{F},\mu)$ the restriction $\Phi_\mathbf{E} = \Phi|_{\mathbf{E}(\Omega,\mathcal{F},\mu)}$ is an isometric
 order isomorphism between the symmetric space $\mathbf{E}(\Omega,\mathcal{F},\mu)$ and its equimeasurable standard symmetric space $\mathbf{E}(I,\mathcal{B}_m, m)$.
 \end{itemize}
     We prove the theorem in Section \ref{s Symmetric spaces on separable measure spaces}, where an explicit construction of the
 isomorphism $\Phi$ is given.
    In the case when $(\Omega,\mathcal{F},\mu)$ is a Lebesgue space, the result follows directly from Rokhlin's classification of Lebesgue space,
 see, \cite{Ro$_1$}.
    Therefore, we focus attention on separable measure spaces $(\Omega,\mathcal{F},\mu)$ which are not isomorphic to
 Lebesgue spaces, see, Sections \ref{ss Subspaces and factor-spaces}, \ref{ss The Stone representation} and
 \ref{ss Construction of  $Phi$}.

     Theorem \ref{th isomorphism Phi Separable case} implies Theorems \ref{th main theorem 1} and \ref{th main theorem 2} in the separable
 case.

      The proof of Theorems \ref{th main theorem 1} and \ref{th main theorem 2} for general (not necessarily separable) measure
 spaces $(\Omega,\mathcal{F},\mu)$ is given in Section \ref{s Symmetric spaces on general measure spaces}.

     In Section \ref{s Some consequences and applications} we deal with various properties of symmetric spaces which are invariant for
 classes of equimeasurable symmetric spaces.
     For example, minimality, maximality, reflectivity, as well as the properties (A), (B), (C) and (F) are so.
     While, in contrast to this, separability is not an invariant property of equimeasurable symmetric spaces.

\section{Main results}
\label{s Main results}

\subsection{Equimeasurable functions}
 \label{ss Equimeasurable functions}

  Let $(\Omega,\mathcal{F}, \mu)$ be a measure space with finite or infinite $\sigma$-finite non-atomic measure
 $\mu$ defined on a $\sigma$-algebra $\mathcal{F}$ of subsets in $\Omega$.
    By passing to  the $\mu$-completion $ \mathcal{F}_\mu$ we may assume that the measure space $(\Omega,\mathcal{F}, \mu)$ is
   $\mu$-complete, i.e. $\mathcal{F} = \mathcal{F}_\mu$,
$$
 A \subseteq B \in \mathcal{F}_\mu \;,\; \mu(B) = 0 \;\Longrightarrow\; A \in \mathcal{F}_\mu \;,\; \mu(A) = 0\;.
$$
We write $(I, \mathcal{B}_m, m)$ in the special case, when either
 $I=[0, \infty)$ or  $I=[0, a]$ with $0< a < \infty $, while $m$ is the usual Lebesgue measure on $I$
 and $\mathcal{B}_m$ is the $m$-completion of the Borel $\sigma$-algebra $\mathcal{B} = \mathcal{B}(I)$ with respect to $m$.

      We denote by $ \mathbf{L}_0 = \mathbf{L}_0(\Omega,\mathcal{F},\mu)$ the set of all classes $\mu$-measurable functions
  $f \colon \Omega \to \mathbf{R} $, which are equal $\mu$-almost everywhere.
  The space $\mathbf{L}_0$ is a function algebra over $\mathbf{R}$ and a conditionally complete lattice.
      We also equipped $\mathbf{L}_0$ with a metric
$$
    \delta_0(f,g) = \int_\Omega \frac{|f-g|}{1+|f-g|}\, w \,d\mu \;,\; f,g
    \in \mathbf{L}_0(\Omega, \mathcal{F}_\mu,\mu) \;\mbox{and some}\; w \in \mathbf{L}_1^+(\Omega, \mathcal{F}_\mu,\mu)\;,
$$
 which induces the topology of measure convergence on finite measure subsets of $(\Omega, \mathcal{F}, \mu)$.
    The metric space $(\mathbf{L}_0,\delta_0)$ is complete metric space with respect to the distance.

 For every $f \in \mathbf{L}_0(\Omega, \mathcal{F},\mu)$ we define the {\it (upper) distribution function}
 $\eta_{f,\mu} $ of $|f|$ by
$$
    \eta_{f,\mu}(x) := \mu\,(\{|f| > x \}) \;, \; \; \mbox{where} \;\;
    \{|f| > x \} := \{ \omega \in \Omega\colon |f(\omega)| > x \} \;.
$$

Then $\eta_{f,\mu} $ is a decreasing and right-continuous function on $ [0,+\infty)$ such that
 $ \eta_{f,\mu} (x) \in [0, \mu(\Omega]$ for all $0 \leq x < \infty$.

     For any $f \in \mathbf{L}_0(\Omega,\mathcal{F}_\mu, \mu)$ there exists a unique function $\xi_{f,\mu} $ on $[0, \infty)$ such
  that $\xi_{f.\mu}$, which is decreasing, right-continuous,
$$
   \eta_{{\xi_{f,\mu}},\,m} = \eta_{f,\mu} \ \; \mbox{and}\; \; \xi_{{\xi_{f,\mu}},\,m} =\xi_{f,\mu}\;.
$$

The function $\xi_{f,\mu} $ can be constructed as the right-continuous (generalized)
 inverse function for $\eta_{f,\,\mu}$, i.e.
$$
 \xi_{f,\mu}(x) := \inf \{y \in [0,+\infty) \colon \eta_{f,\mu}(y) \leq x \} \;,\; x \in [0,\infty) \;.
$$
     It is called the {\it decreasing rearrangement} of $|f|$ with respect to $\mu$.

     In the case $\mu(\Omega) = a <\infty$ the function $\xi_{f,\mu}$ is defined on the segment $[0, a]$ since
 $\eta_{f,\mu}(x) \leq a$ for all $x$.

In the case $\mu(\Omega) = \infty$ the function $\xi_{f,\mu}$ is defined on $[0,\infty)$ and extended to
 $[0,\infty]$ by
$$
 \xi_{f,\mu}(\infty) := \lim_{x \rightarrow\infty} \xi_{f,\mu}(x) = \inf \{ y >0 \colon \eta_{f,\mu} (y) < \infty\} \;.
$$
    It is possible in this case that $\eta_{f,\mu}(x) = \infty$ or $\xi_{f,\mu}(x) = \infty$ for some and even
 for all $x \in [0,\infty)$.
    To avoid the undesirable cases we use the subspace
$$
  \mathbf{ L}_0^\xi(\Omega,\mathcal{F},\mu) := \{ f \in\mathbf{ L}_0(\Omega,\mathcal{F},\mu) \colon \xi_{f,\mu}(x) < \infty, \ x \in I\}=
  $$
  $$
   =\{ f \in \mathbf{L}_0(\Omega,\mathcal{F},\mu) \colon \eta_{f,\mu}(\infty)= \lim_{x \to \infty}\eta_{f,\mu}(x) =0, \  x >0\}.
$$

Then by the definition, we have $\xi_{f,\mu} \in\mathbf{L}_0(I,\mathcal{B}_m,m)$ iff $f \in \mathbf{L}_0^\xi(\Omega,\mathcal{F},\mu)$
 and
$$
 \eta_{\xi_{f,\mu},m}(y) = m(\{x \geq 0 \colon \xi_{f,\mu}(x) > y \})= \xi^{-1}_{f,\,\mu}(y)= \eta_{f,\mu}(y)
  \;,\; y > 0 \;.
$$

Two non-negative functions
 $f_1 \in\mathbf{ L}_0(\Omega_1, {\mathcal{F}_\mu}_1,\mu_1)$ and $f_2 \in L_0(\Omega_2, {\mathcal{F}_\mu}_2, \mu_2)$ are called
 {\it equimeasurable} if they have the same distribution functions, $\eta_{f_1,\mu_1} =\eta_{f_2,\mu_2}$.

    The latter equality is obviously equivalent to $\xi_{f_1,\mu_1} =\xi_{f_2,\mu_2}$.

    By the definition, \ $\xi_{f,\mu} \in \mathbf{L}_0(I,\mathcal{B}_m,m)$  for  $f\in \mathbf{L}_0(\Omega,\mathcal{F},\mu)$ such that
 $\eta_{f,\mu}(\infty)~=~0$.

     It should be noted that functions
 $f_1 \in\mathbf{ L}_0(\Omega_1, \mathcal{F}_1, \mu_1)$ and $f_2 \in \mathbf{L}_0(\Omega_2,\mathcal{F}_2, \mu_2)$ are defined on  possibly
 different measure spaces $(\Omega_1,\mathcal{F}_1,\mu_1)$ and $(\Omega_2,\mathcal{F}_2,\mu_2)$, respectively, while both their
 rearrangements $\xi_{f_1,\mu_1}$ and $\xi_{f_2,\mu_2}$ are defined on the same segment
 $[0, a]$ with $a =\mu_1(\Omega_1) =\mu_1(\Omega_2)$.

\subsection{Symmetric spaces}
 \label{ss Symmetric spaces}

  A non-trivial Banach (quasi-Banach) space $(\mathbf{E},\|\cdot\|_\mathbf{E})=(\mathbf{E}(\Omega,\mathcal{F}, \mu),\|\cdot\|_{\mathbf{E}(\Omega,\mathcal{F}, \mu)})$ of real measurable
 functions on a measure space $(\Omega,\mathcal{F}, \mu)$ is called  {\it symmetric } if the following two
 conditions hold:
\begin{itemize}
 \item[1.]
  $\, f \in \mathbf{L}_0(\Omega,\mathcal{F}, \mu) \,,\, g\in \mathbf{E}$ and $|f|\leq |g|$ imply $f\in \mathbf{E}$ and $\|f\|_\mathbf{E} \leq \|g\|_\mathbf{E}$.
 \item[2.]
  $\, f \in \mathbf{L}_0(\Omega,\mathcal{F}, \mu) \,,\, g \in \mathbf{E}$ and $\eta_{f,\mu} =\eta_{g,\mu}$ imply $f\in \mathbf{E}$ and
  $\|f\|_\mathbf{E}=\| g\|_\mathbf{E}$.
\end{itemize}

  Condition 1 means that $\mathbf{E}$ is an ideal Banach (quasi-Banach) sublattice in $\mathbf{L}_0$.
  Condition 2 is the {\it symmetry (or rearrangement invariance) property} of the quasi-norm $\|\cdot\|_\mathbf{E}$.

  Thus a symmetric space is an ideal Banach (quasi-Banach) lattice with a symmetric norm (quasi-norm).

  Since $|f|\leq |g|$ implies $\eta_{f,\mu} \leq \eta_{f,\mu}$ and $\xi_{f,\mu} \leq \xi_{g,\mu}$,
  the conditions 1 and 2 can be rewritten by means of rearrangements $\xi_{f,\mu} $  as:
$$
 f \in \mathbf{L}_0(\Omega,\mathcal{F}, \mu) \,,\, g \in \mathbf{E} \;\mbox{and}\; \xi_{f,\mu} \leq \xi_{g,\mu} \;\Longrightarrow f \in \mathbf{E}
  \;\mbox{and}\; \|f\|_\mathbf{E} \leq \|g\|_\mathbf{E} \;.
$$

Turning to the classical spaces $\mathbf{L}_p(\Omega,\mathcal{F}, \mu), \ 0 < p \leq \infty$ we set
$$
 \mathbf{ L}_p(\Omega,\mathcal{F}, \mu) := \{ f \in  \mathbf{L}_0(\Omega,\mathcal{F}, \mu) \colon \|f\|_{\mathbf{L}_p(\Omega,\mathcal{F}, \mu)}  <\infty\} \;,
$$
where for $ 0 < p <\infty$,
$$
 \|f\|_{{\mathbf{L}_p}(\Omega,\mathcal{F} \mu)} = \left (\int_\Omega |f|^p d\mu \right)^{\frac{1}{p}} \;,
$$
and
$$
    \|f\|_{\mathbf{L}_\infty(\Omega,\mathcal{F}, \mu)} := \inf\{a>0  \colon \mu\{|f|>a\}=0\} ;.
$$
    Then the spaces $\mathbf{L}_p(\Omega,\mathcal{F}, \mu) $ are ideal Banach lattices for all $1\leq p \leq \infty$,
 while they are ideal quasi-Banach lattices for $0 <p < 1$.

   In the latter case, the quasi-norm $\|\cdot\|_{\mathbf{L}_p}$ is a $p$-norm, i.e.
  $$
  \|f + g \|_{\mathbf{L}_p}^p \leq \|f\|_{\mathbf{L}_p}^p + \| g \|_{\mathbf{L}_p}^p  \;,\;  f,g \in \mathbf{L}_p \;,
  $$
 whence $\mathbf{L}_p$ becomes a (complete) metric space with respect to
 the metric
 $$
 \delta_p (f,g) = \|f - g \|_{\mathbf{L}_p}^p .
 $$

 On the other hand, for every $0<p<\infty$,
$$
 \|f\|_{ \mathbf{L}_p(\Omega,\mathcal{F}, \mu)} = \left( \int_\Omega |f|^p d\mu \right)^{\frac{1}{p}}=
 \left( \int_0^\infty (\xi_{f,\mu})^p dm \right)^{\frac{1}{p}} =
 \|\xi_{f,\mu}\|_{ \mathbf{L}_p(I,\mathcal{B}_m,m)} \;,
$$
and
$$
 \|f\|_{\mathbf{L}_\infty(\Omega,\mathcal{F}, \mu)} = \max_{\,x \in \mathbf{R}^+} \xi_{f,\mu}(x)=  \xi_{f,\mu}(0)
 =\|\xi_{f,\mu}\|_{\mathbf{L}_\infty(I,\mathcal{B}_m,m)}\;,
$$
since the functions $|f| \in \mathbf{L}_0(\Omega,\mathcal{F}, \mu)$ and $\xi_{f,\mu}
\in \mathbf{L}_0(I,\mathcal{B}_m,m)$ are equimeasurable.

   These equalities imply that $\mathbf{L}_p(\Omega,\mathcal{F}, \mu)$ is a symmetric (quasi-Banach) space for every
 $0 < p \leq \infty$.

 Returning to general symmetric spaces $\mathbf{E} = \mathbf{E}(\Omega,\mathcal{F}, \mu)$ we
  can notice:
\begin{itemize}
 \item[$\bullet$]
  Let $\mathbf{E}$ be a symmetric Banach space on $(\Omega,\mathcal{F}, \mu)$.
  Then there exist continuous embeddings
$$
  \mathbf{L}_1\cap \mathbf{L}_\infty \subseteq \mathbf{E} \subseteq \mathbf{L}_1+ \mathbf{L}_\infty  \subset \mathbf{L}_0
$$
with
$$
\varphi_\mathbf{E}(1)\|\cdot\|_{\mathbf{L}_1\cap \mathbf{L}_\infty } \geq \|\cdot\|_\mathbf{E} \geq \varphi_\mathbf{E}(1)\|\cdot\|_{\mathbf{L}_1+ \mathbf{L}_\infty },
$$
 where $\varphi_\mathbf{E}(t) := \|1_{[0,t]}\|, \ t \geq 0$ is the fundamental function of $\mathbf{E}$ and
 $\varphi_\mathbf{E}(1) = \|1_{[0,1]}\|$.
  (See, \cite{KrPiSe}, \cite{MuRu}.)
 \item[$\bullet$]
   Let $\mathbf{E}$ be a symmetric quasi-Banach space on $(\Omega,\mathcal{F}, \mu)$
   which satisfies the weak triangle inequality
$$
  \|f+g\|_\mathbf{E} \leq C (\|f\|_\mathbf{E }+ \|g\|_\mathbf{E}) \;,\; f,g \in \mathbf{E}
$$
with a constant $C >1$.

  Then by the Aoki-Rolewicz theorem ([Ao], [Rol]), the quasi-norm $\|\cdot\|_\mathbf{E}$ can be
 replaced by an equivalent $p$-sub-additive quasi-norm $|\|\cdot|\|_\mathbf{E}$ such that
 $$
  |\|f+g|\|_\mathbf{E}^p \leq |\|f|\|_\mathbf{E}^p  + |\|g|\|_\mathbf{E}^p  \;,\; f,g \in  \mathbf{E}\;,
 $$
 where
 $$
  p := \frac{\ln 2}{\ln 2 + \ln C} < 1 \;.
 $$
    Such the quasi-norm  $|\|\cdot|\|_E$ is called a {\it $p$-norm} and the quasi-Banach space $\mathbf{E}$
 is said to be {\it  $p$-normable}.
   Note that $\mathbf{E}$ becomes a complete linear metric space with respect to the translation-invariant metric
 $\delta_\mathbf{E}(f, g) = |\|f-g |\|_\mathbf{E}^p, \ f,g \in \mathbf{E}$.

    The constant $C_\mathbf{E} = \inf C \geq 1$ defined by $C$ in the weak triangle
 inequality, is called the {\it modulus of concavity} of the quasi-Banach space $\mathbf{E}$ ([Kal]).
    Evidently, $C_\mathbf{E} =1$ if $\mathbf{E}$ is normable, however, the converse is not true in general.
\end{itemize}

\subsection{Equimeasurable symmetric spaces}
 \label{ss Equimeasurable symmetric spaces}

Recall that here and throughout, the considered measure spaces $(\Omega,\mathcal{F}, \mu)$ are assumed to have
 finite or infinite $\sigma$-finite {\it non-atomic} measures $\mu$.

     The corresponding to $(\Omega,\mathcal{F}, \mu)$ standard measure space $(I, \mathcal{B}_m, m)$ is defined as
 $I=[0, \infty)$ if $\mu(\Omega) =  \infty $ and $I=[0, a]$ if $\mu(\Omega) = a < \infty $.
     Here, $m$ is the usual Lebesgue measure on $I$ and $\mathcal{B}_m$ is the $m$-completion of the Borel
 $\sigma$-algebra $\mathcal{B} = \mathcal{B}(I)$ with respect to $m$.

  A symmetric space $\mathbf{E}(\Omega,\mathcal{F}, \mu)$ will be called {\it standard} if
 the corresponding measure space $(\Omega,\mathcal{F}, \mu)$ is standard.

   For a symmetric space $\mathbf{E} = \mathbf{E}(\Omega,\mathcal{F},\mu)$  we set
$$
     \Xi(\mathbf{E}) := \{ \xi_{f,\mu} \in \mathbf{L}_0(I, \mathcal{B}_m, m) \colon f \in \mathbf{L}_0(\Omega,\mathcal{F},\mu)  \}.
$$
  This additional condition $\xi_{f,\mu} \in \mathbf{L}_0(I, \mathcal{B}_m, m)$, i.e. $f \in \mathbf{L}_0^\xi(\Omega, \mathcal{F}_\mu, \mu)$ makes sense if $\mu(\Omega) = \infty$.

 The function $\|\cdot\|_\mathbf{E} \colon\mathbf{E} \to [0,\infty)$ induces the mapping
 $$
 \|\cdot\|_{\Xi(\mathbf{E})} \colon \Xi(\mathbf{E}) \ni \xi_{f,\mu} \to \|\xi_{f,\mu}\|_{\Xi(\mathbf{E})} =
  \|f\|_\mathbf{E} \in [0,\infty) \; ,\; f \in \mathbf{E} \;.
 $$

    Two symmetric spaces $\mathbf{E}_1 = \mathbf{E}_1(\Omega_1,\mathcal{F}_1, \mu_1)$ and $\mathbf{E}_2 = \mathbf{E}_2(\Omega_2,\mathcal{F}_2,\mu_2)$
 will be called {\it equimeasurable} if
 $$
 \Xi(\mathbf{E}_1) = \Xi(\mathbf{E}_2).
 $$

    If, in addition, $\|\cdot\|_{\Xi(\mathbf{E}_1)} = \|\cdot\|_{\Xi(\mathbf{E}_2)}$, the spaces
 $\mathbf{E}_1 = \mathbf{E}_1(\Omega_1,\mathcal{F}_1,\mu_1)$ and $\mathbf{E}_2 = \mathbf{ E}_2(\Omega_2,\mathcal{F}_2,\mu_2)$ will be called
 {\it strictly equimeasurable}.

     The following two theorems show that each class of equimeasurable symmetric spaces contains a
 standard symmetric space, while all equimeasurable standard symmetric spaces coincide.

\begin{theorem}
 \label{th main theorem 1}
     Let $\mathbf{ E}(I,\mathcal{B}_m,m)$ be a standard symmetric space and $(\Omega,\mathcal{F}, \mu)$ be an arbitrary
 (finite or infinite $\sigma$-finite non-atomic) measure space $(\Omega,\mathcal{F}, \mu)$, and  let
\begin{equation}
 \label{eq E main theorem 1}
  \mathbf{E}(\Omega,\mathcal{F}, \mu) := \{f \in \mathbf{L}_0(\Omega,\mathcal{F}, \mu)  \colon  \xi_{f,\mu}  \in \mathbf{E}(I,\mathcal{B}_m,m) \}
\end{equation}
 and
\begin{equation}
 \label{eq norm main theorem 1}
   \|f\|_{\mathbf{E}(\Omega,\mathcal{F}, \mu)} = \|\xi_{f,\mu}\|_{\mathbf{E}(I,\mathcal{B}_m,m)} \;\;,\;\; f \in \mathbf{E}(\Omega,\mathcal{F}, \mu)
\end{equation}
   Then the space $(\mathbf{E}(\Omega,\mathcal{F}, \mu),\|\cdot\|_{\mathbf{E}(\Omega,\mathcal{F}, \mu)})$ defined by
 (\ref{eq E main theorem 1}) and (\ref{eq norm main theorem 1}) is a symmetric space on $(\Omega,\mathcal{F}, \mu)$.
   The symmetric spaces $\mathbf{E}(I,\mathcal{B}_m,m)$ and $\mathbf{E}(\Omega,\mathcal{F}, \mu)$ are strictly equimeasurable.
\end{theorem}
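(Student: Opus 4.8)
The plan is to verify the two defining conditions of a symmetric space together with the Banach/quasi-Banach axioms for $(\mathbf{E}(\Omega,\mathcal{F},\mu),\|\cdot\|_{\mathbf{E}(\Omega,\mathcal{F},\mu)})$, exploiting that the entire structure is read off from the single scalar map $f\mapsto\xi_{f,\mu}$. First I would record the elementary functional properties of the rearrangement: $\xi_{cf,\mu}=|c|\,\xi_{f,\mu}$; $\xi_{f,\mu}=0$ iff $f=0$ $\mu$-a.e.; $|f|\le|g|\Rightarrow\xi_{f,\mu}\le\xi_{g,\mu}$; and $\eta_{f,\mu}=\eta_{g,\mu}\Rightarrow\xi_{f,\mu}=\xi_{g,\mu}$. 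Granting these, conditions~1 and~2 of the definition are immediate: if $|f|\le|g|$ with $g\in\mathbf{E}(\Omega,\mathcal{F},\mu)$, then $\xi_{f,\mu}\le\xi_{g,\mu}\in\mathbf{E}(I,\mathcal{B}_m,m)$, and the ideal property of $\mathbf{E}(I,\mathcal{B}_m,m)$ forces $\xi_{f,\mu}\in\mathbf{E}(I,\mathcal{B}_m,m)$ with $\|\xi_{f,\mu}\|\le\|\xi_{g,\mu}\|$; while $\eta_{f,\mu}=\eta_{g,\mu}$ gives $\xi_{f,\mu}=\xi_{g,\mu}$ and hence equality of norms. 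Positive-definiteness and homogeneity of $\|\cdot\|_{\mathbf{E}(\Omega,\mathcal{F},\mu)}$ transfer in the same way from $\|\cdot\|_{\mathbf{E}(I,\mathcal{B}_m,m)}$.

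The first genuine step is linearity of the set and the triangle inequality. Here I would invoke the Hardy--Littlewood--Polya submajorization $\int_0^t\xi_{f+g,\mu}\,dm\le\int_0^t(\xi_{f,\mu}+\xi_{g,\mu})\,dm$ for all $t$, together with the fact that a symmetric norm on the standard space is monotone with respect to submajorization (the Calderon--Mityagin property). Since $\xi_{f,\mu}+\xi_{g,\mu}$ is itself decreasing and lies in $\mathbf{E}(I,\mathcal{B}_m,m)$, this yields $\xi_{f+g,\mu}\in\mathbf{E}(I,\mathcal{B}_m,m)$ and
$$
\|f+g\|_{\mathbf{E}(\Omega,\mathcal{F},\mu)}=\|\xi_{f+g,\mu}\|\le\|\xi_{f,\mu}+\xi_{g,\mu}\|\le\|f\|_{\mathbf{E}(\Omega,\mathcal{F},\mu)}+\|g\|_{\mathbf{E}(\Omega,\mathcal{F},\mu)},
$$
so $\mathbf{E}(\Omega,\mathcal{F},\mu)$ is a normed ideal lattice. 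In the quasi-Banach case I would instead use the pointwise bound $\xi_{f+g,\mu}(t)\le\xi_{f,\mu}(t/2)+\xi_{g,\mu}(t/2)$, which follows from $\{|f+g|>x+y\}\subseteq\{|f|>x\}\cup\{|g|>y\}$, combined with boundedness of the dilation operator on $\mathbf{E}(I,\mathcal{B}_m,m)$ and the Aoki--Rolewicz $p$-norm, to obtain the weak triangle inequality with a finite modulus of concavity.

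The main obstacle is completeness. One cannot argue via the Fatou, or even weak Fatou, property, since a symmetric space need not be maximal---for instance the order-continuous part of a Marcinkiewicz space is a complete symmetric space failing weak Fatou---so lower semicontinuity of $\|\cdot\|_{\mathbf{E}(I,\mathcal{B}_m,m)}$ under convergence in measure is unavailable, and the naive Cauchy-sequence estimate stalls precisely because the difference of rearrangements is not controlled in norm by the rearrangement of the difference. My plan is therefore to transport completeness from $\mathbf{E}(I,\mathcal{B}_m,m)$ along a rearrangement-preserving linear isometry. In the separable case I would use the isomorphism $\Phi\colon\mathbf{L}_0(\Omega,\mathcal{F},\mu)\to\mathbf{L}_0(I,\mathcal{B}_m,m)$ of Theorem~\ref{th isomorphism Phi Separable case}, which is order- and integral-preserving and hence satisfies $\xi_{\Phi f,m}=\xi_{f,\mu}$; then $\mathbf{E}(\Omega,\mathcal{F},\mu)=\Phi^{-1}(\mathbf{E}(I,\mathcal{B}_m,m))$ and $\Phi$ restricts to a surjective linear isometry onto the Banach space $\mathbf{E}(I,\mathcal{B}_m,m)$, so $\mathbf{E}(\Omega,\mathcal{F},\mu)$ is complete. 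For a general (non-separable) $(\Omega,\mathcal{F},\mu)$ I would reduce to this case: a Cauchy sequence together with its $\mu$-a.e. limit is measurable with respect to a countably generated, hence separable, $\sigma$-subalgebra, on which the separable result applies and on which the relevant rearrangements are unchanged.

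Finally, strict equimeasurability is read off from the construction. By definition $\Xi(\mathbf{E}(\Omega,\mathcal{F},\mu))$ consists of exactly those decreasing, right-continuous functions on $I$ that belong to $\mathbf{E}(I,\mathcal{B}_m,m)$; and since $\mu$ is non-atomic, $\sigma$-finite with $\mu(\Omega)=m(I)$, every such function is realizable as $\xi_{f,\mu}$ for some $f\in\mathbf{L}_0(\Omega,\mathcal{F},\mu)$. Because $h$ and $\xi_{h,m}$ are equimeasurable, this same set equals $\Xi(\mathbf{E}(I,\mathcal{B}_m,m))$, whence $\Xi(\mathbf{E}(\Omega,\mathcal{F},\mu))=\Xi(\mathbf{E}(I,\mathcal{B}_m,m))$; the induced norms agree on this common set by the defining formula (\ref{eq norm main theorem 1}). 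Therefore the two spaces are strictly equimeasurable.
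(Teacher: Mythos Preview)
Your overall strategy---transport the structure from $\mathbf{E}(I,\mathcal{B}_m,m)$ via the isomorphism $\Phi$ of Theorem~\ref{th isomorphism Phi Separable case} in the separable case, and in the general case observe that any countable collection of functions is measurable with respect to a common countably generated (hence separable) sub-$\sigma$-algebra---is precisely the paper's approach. The paper organizes it by writing $\mathbf{E}(\Omega,\mathcal{F}_\mu,\mu)=\bigcup_{\mathcal{A}\in\mathfrak{A}}\mathbf{E}_\mathcal{A}$ over the directed family $\mathfrak{A}$ of countable subalgebras, with each $\mathbf{E}_\mathcal{A}=\Phi_\mathcal{A}^{-1}(\mathbf{E}_0)$ isometrically isomorphic to the standard space; all the (quasi-)Banach and lattice axioms then hold on each piece and hence on the directed union. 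Your handling of completeness and of strict equimeasurability is correct and matches this.

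There is, however, a gap in your direct verification of the triangle inequality in the Banach case. You assert that ``a symmetric norm on the standard space is monotone with respect to submajorization (the Calder\'on--Mityagin property),'' but this is not automatic: by the Calder\'on--Mityagin theorem, monotonicity of $\|\cdot\|_{\mathbf{E}(I,\mathcal{B}_m,m)}$ under Hardy--Littlewood--P\'olya submajorization is equivalent to $\mathbf{E}(I,\mathcal{B}_m,m)$ being an \emph{exact} interpolation space for the couple $(\mathbf{L}_1,\mathbf{L}_\infty)$, and this typically requires property~(C) or the Fatou property, neither of which is part of the hypotheses here. Your dilation fallback $\xi_{f+g,\mu}(t)\le\xi_{f,\mu}(t/2)+\xi_{g,\mu}(t/2)$ only yields a weak triangle inequality with the dilation constant, generally strictly greater than~$1$, so it does not recover the exact triangle inequality either. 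The remedy is the one you already deploy for completeness and the paper uses throughout: given $f,g\in\mathbf{E}(\Omega,\mathcal{F},\mu)$, pass to a separable sub-$\sigma$-algebra over which both are measurable and use the \emph{linear}, rearrangement-preserving isomorphism $\Phi$ to obtain
\[
\|\xi_{f+g,\mu}\|_{\mathbf{E}(I)}=\|\Phi(f+g)\|_{\mathbf{E}(I)}=\|\Phi f+\Phi g\|_{\mathbf{E}(I)}\le\|\Phi f\|_{\mathbf{E}(I)}+\|\Phi g\|_{\mathbf{E}(I)}=\|\xi_{f,\mu}\|_{\mathbf{E}(I)}+\|\xi_{g,\mu}\|_{\mathbf{E}(I)}
\]
directly from the triangle inequality already given in $\mathbf{E}(I,\mathcal{B}_m,m)$. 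Once you route the triangle inequality through $\Phi$ as well, your argument coincides with the paper's.
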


 \begin{theorem}
  \label{th main theorem 2}
  Let $\mathbf{E}(\Omega,\mathcal{F}, \mu)$ be a symmetric space on a measure space  $(\Omega,\mathcal{F}, \mu)$, and let
\begin{equation}
 \label{eq E main theorem 2}
  \mathbf{E}(I,\mathcal{B}_m,m) := \{g \in \mathbf{L}_0(I,\mathcal{B}_m,m) \colon \xi_{g,m}  = \xi_{f,\mu} \; \mbox{for some}  \; f\in (\Omega,\mathcal{F}, \mu)  \}\,,
\end{equation}
 and
\begin{equation}
 \label{eq norm main theorem 2}
 \|g\|_{\mathbf{E}(I,\mathcal{B}_m,m)} = \|f\|_{\mathbf{E}(\Omega,\mathcal{F}, \mu)} \;\;\mbox{if}
 \;\; \xi_{g,m} = \xi_{f,\mu} \;\;\mbox{and}\;\; f \in \mathbf{E}(\Omega,\mathcal{F}, \mu)\;.
\end{equation}
   Then the space $ (\mathbf{E}(I,\mathcal{B}_m,m),\|\cdot\|_{\mathbf{E}(I,\mathcal{B}_m,m)})$ defined by
 (\ref{eq E main theorem 2}) and (\ref{eq norm main theorem 2}) is a standard symmetric space.
   The symmetric spaces $\mathbf{E}(\Omega,\mathcal{F}, \mu)$ and  $ \mathbf{E}(I,\mathcal{B}_m,m)$ are strictly equimeasurable.
\end{theorem}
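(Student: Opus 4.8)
The plan is to reduce every assertion about $\mathbf{E}(I,\mathcal{B}_m,m)$ to the corresponding property of $\mathbf{E}(\Omega,\mathcal{F},\mu)$ by transporting functions along a single measure-preserving map. First I would verify that the quasi-norm in (\ref{eq norm main theorem 2}) is well defined. If $\xi_{g,m}=\xi_{f_1,\mu}=\xi_{f_2,\mu}$ with $f_1,f_2\in\mathbf{E}(\Omega,\mathcal{F},\mu)$, then $\eta_{f_1,\mu}=\eta_{f_2,\mu}$, so Condition 2 for $\mathbf{E}(\Omega,\mathcal{F},\mu)$ gives $\|f_1\|_\mathbf{E}=\|f_2\|_\mathbf{E}$; hence $\|g\|$ depends only on $\xi_{g,m}$. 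Equivalently, by the definition of $\Xi$, membership $g\in\mathbf{E}(I,\mathcal{B}_m,m)$ is exactly the condition $\xi_{g,m}\in\Xi(\mathbf{E}(\Omega,\mathcal{F},\mu))$, and $\|g\|_{\mathbf{E}(I,\mathcal{B}_m,m)}=\|\xi_{g,m}\|_{\Xi(\mathbf{E})}$.

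The key tool is a measurable map $T\colon\Omega\to I$ with $T_*\mu=m$, i.e. $\mu(T^{-1}A)=m(A)$ for $A\in\mathcal{B}_m$. Given such a $T$, the composition operator $J(g):=g\circ T$ is a linear, multiplicative lattice homomorphism from $\mathbf{L}_0(I,\mathcal{B}_m,m)$ into $\mathbf{L}_0(\Omega,\mathcal{F},\mu)$ that preserves rearrangements, $\xi_{g\circ T,\mu}=\xi_{g,m}$, since $g$ and $g\circ T$ are equimeasurable. Consequently $g\in\mathbf{E}(I,\mathcal{B}_m,m)$ iff $J(g)\in\mathbf{E}(\Omega,\mathcal{F},\mu)$, and in that case $\|g\|_{\mathbf{E}(I,\mathcal{B}_m,m)}=\|J(g)\|_{\mathbf{E}(\Omega,\mathcal{F},\mu)}$ (take $f=J(g)$ in (\ref{eq norm main theorem 2})). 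Moreover $J$ is injective, because $T_*\mu=m$ forces $\mu(\{g\circ T\neq 0\})=m(\{g\neq 0\})$. Thus $J$ is a linear isometry of $\mathbf{E}(I,\mathcal{B}_m,m)$ onto the set of $T^{-1}(\mathcal{B}_m)$-measurable functions lying in $\mathbf{E}(\Omega,\mathcal{F},\mu)$.

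All required properties then transfer through $J$. For the ideal property (Condition 1), if $|h|\leq|g|$ with $g\in\mathbf{E}(I,\mathcal{B}_m,m)$, then $|J(h)|\leq|J(g)|$ with $J(g)\in\mathbf{E}(\Omega,\mathcal{F},\mu)$, so Condition 1 for $\mathbf{E}(\Omega,\mathcal{F},\mu)$ yields $J(h)\in\mathbf{E}(\Omega,\mathcal{F},\mu)$ and $\|J(h)\|\leq\|J(g)\|$; transporting back gives $h\in\mathbf{E}(I,\mathcal{B}_m,m)$ and $\|h\|\leq\|g\|$. Condition 2 is immediate from the definition together with well-definedness. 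The quasi-norm axioms, including the weak triangle inequality with the same modulus of concavity, follow because $J$ is linear and norm-preserving and $J(g_1+g_2)=J(g_1)+J(g_2)$, so no independent control of $\xi_{g_1+g_2,m}$ is needed. Completeness follows since $J(\mathbf{E}(I,\mathcal{B}_m,m))$ is the intersection of $\mathbf{E}(\Omega,\mathcal{F},\mu)$ with the set of $T^{-1}(\mathcal{B}_m)$-measurable functions, which is closed under almost-everywhere limits; using that $\mathbf{E}$-convergence implies convergence in measure and hence admits an almost-everywhere convergent subsequence, this set is closed, so $J(\mathbf{E}(I,\mathcal{B}_m,m))$ is a closed, complete subspace of the complete space $\mathbf{E}(\Omega,\mathcal{F},\mu)$, and $\mathbf{E}(I,\mathcal{B}_m,m)$ is isometric to it. Strict equimeasurability is then built in: $\Xi(\mathbf{E}(I,\mathcal{B}_m,m))=\Xi(\mathbf{E}(\Omega,\mathcal{F},\mu))$ by (\ref{eq E main theorem 2}), and the induced norms coincide by (\ref{eq norm main theorem 2}).

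The one substantial point is the existence of $T\colon\Omega\to I$ with $T_*\mu=m$ under the hypothesis $\mu(\Omega)=m(I)$, and this is where non-atomicity and $\sigma$-finiteness are essential; I expect it to be the main obstacle. In the separable case it is delivered by Theorem \ref{th isomorphism Phi Separable case}: the integral-preserving isomorphism $\Phi$ is induced by such a $T$, which is moreover invertible modulo null sets. For a general, possibly non-separable, measure space one cannot hope for an isomorphism, but a one-sided measure-preserving factor map onto the standard space still exists: decomposing the measure algebra of $(\Omega,\mathcal{F},\mu)$ into homogeneous components and projecting each component onto a Lebesgue interval, one assembles a measure-preserving $T\colon\Omega\to I$ compatible with $\mu(\Omega)=m(I)$. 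I would establish this construction (the content of the subsequent sections) and then invoke the transfer argument above to conclude.
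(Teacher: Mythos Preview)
Your transfer-via-composition approach is correct and, at its core, the same idea the paper uses: both embed $\mathbf{E}(I,\mathcal{B}_m,m)$ isometrically into $\mathbf{E}(\Omega,\mathcal{F},\mu)$ through a measure-preserving map and read off the symmetric-space axioms from those of $\mathbf{E}(\Omega,\mathcal{F},\mu)$. Your organization through the single operator $J(g)=g\circ T$ is in fact tidier than the paper's presentation, which asserts without detail that $\mathbf{E}_1=\mathbf{E}(\Omega,\mathcal{F}_\mu,\mu)\cap\mathbf{L}_0(\Omega,\mathcal{F}_{\mathcal{A}_1},\mu_{\mathcal{A}_1})$ is a symmetric space and then identifies $\mathbf{E}(I,\mathcal{B}_m,m)$ with the image $\mathbf{E}_1(I,\mathcal{B}_m,m)$ under $\Phi_1$.

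The one place you genuinely diverge is the construction of $T$ in the non-separable case. You propose Maharam decomposition into homogeneous components; this works, but it is heavier than needed, and lifting a measure-algebra projection to a genuine point map takes additional care. The paper's route is more elementary and is already implicit in what you do for the separable case: simply fix any countable subalgebra $\mathcal{A}_1\in\mathfrak{A}$ (so that $(\Omega,\mathcal{F}_{\mathcal{A}_1},\mu_{\mathcal{A}_1})$ is separable, non-atomic, and $\sigma$-finite) and apply Theorem~\ref{th isomorphism Phi Separable case} to this sub-$\sigma$-algebra. The resulting map $\phi_{\mathcal{A}_1}\colon\Omega\to I$ from Theorem~\ref{th isomorphism phi_cA} is defined on all of $\Omega$ and already satisfies $(\phi_{\mathcal{A}_1})_*\mu=m$, so it serves as your $T$ with no appeal to Maharam. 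With this choice, your image $J(\mathbf{E}(I,\mathcal{B}_m,m))$ coincides with the paper's $\mathbf{E}_1$, and the two arguments merge. One small correction: even in the separable case this $T$ need not be invertible modulo null sets unless $(\Omega,\mathcal{F},\mu)$ is a Lebesgue space (the image may be a non-measurable set of full outer measure in $I$); however, you only use $T$ and never $T^{-1}$, so this does not affect your argument.
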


    We will consider these theorems separately for separable and nonseparable measure spaces
 $(\Omega,\mathcal{F}, \mu)$.

 Recall that a measure space $(\Omega,\mathcal{F}, \mu)$ is called separable if the $\sigma$-algebra $\mathcal{F}$ is
 countably generated mod $\mu$.
     This means that there exists a countable subalgebra $\mathcal{A} \subset \mathcal{F}$ such that
 $$
 \mathcal{F} \subseteq (\mathcal{F}(\mathcal{A}))_\mu \subseteq \mathcal{F}_{\mu}
 $$
  where $ (\mathcal{F}(\mathcal{A}))_\mu$ is the $\mu$-completion of the
 $\sigma$-algebra $ \mathcal{F}(\mathcal{A})$ generated by the subalgebra $\mathcal{A}$.
    The countable subalgebra $\mathcal{A} $ is dense in $\mathcal{F}$ in the following sense: for every $A \in \mathcal{F}$ and
 every $\varepsilon >0$ there is $B \in \mathcal{A}$ such that
 $$
 \mu(A\triangle B)< \varepsilon.
 $$

 \begin{theorem}
  \label{th isomorphism Phi Separable case}
     Let $(\Omega,\mathcal{F}, \mu)$ be a separable $\sigma$-finite non-atomic measure space and
 $(I,\mathcal{B}_m, m)$ be the corresponding standard measure space.
     Then there exists an algebraic, order, topological and integral preserving isomorphism
 $$
 \Phi  \colon \mathbf{L}_0(\Omega,\mathcal{F}, \mu) \to   \mathbf{L}_0(I,\mathcal{B}_m, m)\;.
 $$
       For every symmetric space $\mathbf{E}(\Omega,\mathcal{F}, \mu)\subset \mathbf{L}_0(\Omega,\mathcal{F}, \mu)$ the restriction
 $\Phi_\mathbf{E} = \Phi|_{\mathbf{E}(\Omega,\mathcal{F}, \mu)}$ is an isometric isomorphism between the symmetric space
 $\mathbf{E}(\Omega,\mathcal{F}, \mu)$ and its equimeasurable standard symmetric space $\mathbf{E}(I,\mathcal{B}_m, m)$.
 \end{theorem}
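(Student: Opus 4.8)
The plan is to construct $\Phi$ entirely at the level of the underlying measure (Boolean) algebra, and only afterwards to read it off as a map of functions. This is the essential point: as noted in the introduction, a separable $(\Omega,\mathcal{F},\mu)$ need not be isomorphic to a Lebesgue space as a \emph{point} space, so I cannot hope to obtain $\Phi$ by composition with a measurable point transformation $\Omega\to I$. Instead everything must be done through the measure algebra, and the role of the Stone representation is precisely to turn an abstract algebra isomorphism into something concrete enough to manipulate functions.

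First I would pass to the measure algebra $\mathfrak{A}_\mu=\mathcal{F}/\mathcal{N}_\mu$ of classes of sets modulo $\mu$-null sets, carrying the induced measure. Since $\mathcal{F}$ is countably generated $\bmod\,\mu$ and $\mu$ is non-atomic, $\sigma$-finite, with $\mu(\Omega)=m(I)$, the Carath\'eodory uniqueness theorem for separable non-atomic measure algebras (in its $\sigma$-finite form) supplies a measure-preserving Boolean isomorphism $\varphi\colon\mathfrak{A}_\mu\to\mathfrak{A}_m$ onto the measure algebra of $(I,\mathcal{B}_m,m)$, with $m(\varphi(A))=\mu(A)$ for every class $A$. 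Next, using the Stone representation (Section \ref{ss The Stone representation}), I would realize both measure algebras as clopen algebras of their Stone spaces equipped with the pushed-forward regular Borel measures; then $\varphi$ becomes a measure-preserving homeomorphism of Stone spaces, and since $\mathbf{L}_0$ depends only on the measure algebra, this identifies $\mathbf{L}_0(\Omega,\mathcal{F},\mu)$ with $\mathbf{L}_0(I,\mathcal{B}_m,m)$. Concretely, I would set $\Phi(1_A)=1_{\varphi(A)}$, extend linearly to simple functions, and then extend to all of $\mathbf{L}_0(\Omega,\mathcal{F},\mu)$ by continuity, using that simple functions are dense and that $(\mathbf{L}_0,\delta_0)$ is complete. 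Because $\varphi$ respects $\wedge,\vee,\complement$ and preserves measure, the extended $\Phi$ is at once an algebra homomorphism (it respects products of indicators, hence of simple functions), an order isomorphism (it respects lattice operations of sets), a homeomorphism for convergence in measure on finite-measure sets (a property intrinsic to the algebra, independent of the weight $w$ in $\delta_0$), and integral preserving, $\int_\Omega f\,d\mu=\int_I \Phi f\,dm$ for $f\ge 0$. Each property holds on simple functions by construction and passes to the closure.

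The decisive consequence is that $\Phi$ preserves distribution functions. Indeed $\{\,|\Phi f|>y\,\}=\varphi(\{\,|f|>y\,\})$ at the algebra level, so $m(\{\,|\Phi f|>y\,\})=\mu(\{\,|f|>y\,\})$, i.e. $\eta_{\Phi f,m}=\eta_{f,\mu}$, whence $\xi_{\Phi f,m}=\xi_{f,\mu}$ for every $f$. For a symmetric space $\mathbf{E}(\Omega,\mathcal{F},\mu)$ with its equimeasurable standard companion $\mathbf{E}(I,\mathcal{B}_m,m)$ (Theorem \ref{th main theorem 2}), this gives $f\in\mathbf{E}(\Omega,\mathcal{F},\mu)$ iff $\xi_{f,\mu}=\xi_{\Phi f,m}\in\mathbf{E}(I,\mathcal{B}_m,m)$ iff $\Phi f\in\mathbf{E}(I,\mathcal{B}_m,m)$, and $\|\Phi f\|_{\mathbf{E}(I,\mathcal{B}_m,m)}=\|\xi_{\Phi f,m}\|=\|\xi_{f,\mu}\|=\|f\|_{\mathbf{E}(\Omega,\mathcal{F},\mu)}$. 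Hence $\Phi_\mathbf{E}$ is an isometric order isomorphism of $\mathbf{E}(\Omega,\mathcal{F},\mu)$ onto $\mathbf{E}(I,\mathcal{B}_m,m)$, as asserted.

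I expect the main obstacle to be the rigorous passage from the purely abstract algebra isomorphism $\varphi$ to an honest isomorphism of the function spaces $\mathbf{L}_0$ carrying all four structures simultaneously, precisely in the non-Lebesgue separable case where $\Phi$ cannot come from any point map $\Omega\to I$. The Stone representation is what unblocks this: it provides a concrete measure-theoretic model on which indicators, products, lattice operations and integrals can be computed directly, so that the preservation properties established for simple functions survive the limiting extension to all of $\mathbf{L}_0$.
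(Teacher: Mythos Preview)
Your approach is correct but takes a genuinely different route from the paper. You work abstractly at the level of the measure algebra, invoking the Carath\'eodory--Maharam uniqueness of separable non-atomic $\sigma$-finite measure algebras to obtain the Boolean isomorphism $\varphi$, and then lift it to $\mathbf{L}_0$ by density of simple functions. The paper instead builds $\Phi$ from a chain of actual point maps: first the factor map $\pi_\zeta\colon\Omega\to\Omega/\zeta$ associated to the partition $\zeta$ generated by a countable subalgebra $\mathcal{A}$, then the Stone embedding $\phi_{\mathcal{A}'}$ of $\Omega/\zeta$ as a subset of full outer measure in the Stone space $\mathcal{S}$ of the \emph{countable} algebra $\mathcal{A}'=\pi_\zeta(\mathcal{A})$, and finally a Rokhlin isomorphism $\phi_{S,I}\colon\mathcal{S}\to I$ of Lebesgue spaces. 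The key device there is that the Stone space of a countable algebra is compact metrizable, hence a Lebesgue space, combined with the fact (Section~\ref{ss Subspaces and factor-spaces}) that a full-outer-measure subset induces an isomorphism of $\mathbf{L}_0$ spaces even when it is non-measurable. Your invocation of Stone is different in spirit---you apply it to the full measure algebra rather than to a countable subalgebra---and is in fact inessential to your argument: once you have $\varphi$, the extension to $\mathbf{L}_0$ follows directly by the indicator-simple-density scheme you describe, without any Stone step. Your route is shorter and more conceptual; the paper's route is more explicit and exhibits concretely how the non-Lebesgue separable case manifests (the image $J\subseteq I$ may be non-measurable).

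One small circularity to repair: you appeal to Theorem~\ref{th main theorem 2} to supply the equimeasurable standard space $\mathbf{E}(I,\mathcal{B}_m,m)$, but in the paper Theorem~\ref{th isomorphism Phi Separable case} is proved first and is then used to deduce Theorems~\ref{th main theorem 1} and~\ref{th main theorem 2} in the separable case. The fix is immediate: once $\Phi$ is built and shown to satisfy $\xi_{\Phi f,m}=\xi_{f,\mu}$, simply \emph{define} $\mathbf{E}(I,\mathcal{B}_m,m):=\Phi(\mathbf{E}(\Omega,\mathcal{F},\mu))$ with the transported norm; the symmetric-space axioms transfer directly, and this is exactly the equimeasurable companion.
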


     We prove the theorem in Section \ref{s Symmetric spaces on separable measure spaces}, where an
 explicit construction of the isomorphism $\Phi$ is given.

     It is straightforward that Theorem \ref{th isomorphism Phi Separable case} implies Theorems
 \ref{th main theorem 1} and \ref{th main theorem 2} in the separable case.

\section{Symmetric spaces on separable measure spaces}
\label{s Symmetric spaces on separable measure spaces}

\subsection{Lebesgue spaces.}
 \label{ss Lebesgue spaces}

    Recall that a measure space $(\Omega,\mathcal{F}, \mu)$ is called separable if the $\sigma$-algebra $\mathcal{F}$ is
 countably generated mod $\mu$.
    This means that there exists a countable subalgebra $\mathcal{A} \subseteq \mathcal{F}$ such that
 $\mathcal{F} \subseteq (\mathcal{F}(\mathcal{A}))_\mu$, where $(\mathcal{F}(\mathcal{A}))_\mu$ is the $\mu$-completion of the
 $\sigma$-algebra $ \mathcal{F}(\mathcal{A})$ generated by the subalgebra $\mathcal{A}$, \ $(\mathcal{F}(\mathcal{A}))_\mu \subseteq \mathcal{F}_\mu$ .

    We consider the $\sigma$-algebra $\mathcal{F}_\mu$ as an abstract Boolean algebra $\nabla = \nabla (\Omega,\mathcal{F}_\mu,\mu)$
 equipped with the metric
$$
    \delta_\nabla(A,B)= \delta_0(1_A,1_B)  \;,\; A,B \in \nabla \;.
$$
    By the definition, the natural imbedding
    $ \nabla \in A \to 1_A \in  \mathbf{L}_0$
     is isometric,
  whence $(\nabla, \delta_\nabla)$ is a complete metric space as well as $(\mathbf{L}_0,\delta_0)$.

  Recall also the following known result:

   \begin{theorem}
 \label{th separability of gW, nabla and L_0}
   The following conditions are equivalent:
 \begin{itemize}
  \item[1.]
   $(\Omega, \mathcal{F}_\mu,\mu)$  is separable,
  \item[2.]
   $(\nabla, \delta_\nabla)$ is separable,
  \item[3.]
   $(\mathbf{L}_0, \delta_0)$ is separable.
 \end{itemize}
 \end{theorem}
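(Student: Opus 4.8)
The plan is to route both equivalences through the measure algebra $\nabla$, using the fact that, after rewriting $\delta_\nabla$ in terms of a finite equivalent measure, condition 2 is exactly the separability of a measure algebra. First I would make $\delta_\nabla$ explicit. Since $|1_A-1_B| = 1_{A\triangle B}$ takes only the values $0$ and $1$, we have $\frac{|1_A-1_B|}{1+|1_A-1_B|} = \tfrac12\,1_{A\triangle B}$, so that
$$\delta_\nabla(A,B) = \tfrac12\int_{A\triangle B} w\,d\mu = \tfrac12\,\nu(A\triangle B),$$
where $\nu(E):=\int_E w\,d\mu$ is a \emph{finite} measure on $\mathcal F_\mu$. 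Because $w\in\mathbf L_1^+$ is strictly positive $\mu$-a.e., $\nu$ and $\mu$ are mutually absolutely continuous, so $\nu$-null sets coincide with $\mu$-null sets and $(\nabla,\delta_\nabla)$ is, up to the factor $\tfrac12$, the usual measure algebra of the finite measure $\nu$ equipped with its symmetric-difference metric.

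For $1\Leftrightarrow 2$ I would invoke the standard correspondence between countable generation mod $\mu$ and separability of this measure algebra. If $\mathcal F$ is countably generated by a countable subalgebra $\mathcal A$, then the collection of sets $\nu$-approximable by members of $\mathcal A$ is closed under complements, finite unions, and monotone limits (here $\nu(\Omega)<\infty$ is used), hence is a $\sigma$-algebra; it contains $\mathcal A$, so it contains $\mathcal F(\mathcal A)$ and, being $\nu$-closed, all of $\mathcal F_\mu=\nabla$. Thus $\mathcal A$ is a countable $\delta_\nabla$-dense set. Conversely, if $\mathcal D\subset\nabla$ is countable and $\delta_\nabla$-dense, the algebra generated by $\mathcal D$ is still countable and the same approximation argument shows $\mathcal F\subseteq(\mathcal F(\mathcal D))_\mu$, so $\mathcal F$ is countably generated mod $\mu$.

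For $2\Leftrightarrow 3$ I would exploit the isometric imbedding $A\mapsto 1_A$. The implication $3\Rightarrow 2$ is immediate, since $\{1_A:A\in\nabla\}$ is a subset of the separable space $(\mathbf L_0,\delta_0)$ and subsets of separable metric spaces are separable. For $2\Rightarrow 3$ I would first note that simple functions are $\delta_0$-dense in $\mathbf L_0$: for $f\in\mathbf L_0$ the standard simple approximants $s_n\to f$ (pointwise, with $|s_n|\le|f|$) satisfy $\frac{|s_n-f|}{1+|s_n-f|}\,w\to 0$ pointwise dominated by $w\in\mathbf L_1^+$, so $\delta_0(s_n,f)\to 0$ by dominated convergence. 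Taking a countable $\delta_\nabla$-dense subalgebra $\mathcal D$ of $\nabla$ and approximating each set occurring in a simple function by a member of $\mathcal D$ and each coefficient by a rational, I obtain a countable $\delta_0$-dense subset of $\mathbf L_0$, which is condition 3.

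The main obstacle is the infinite $\sigma$-finite case. One must check that replacing $\mu$ by the finite measure $\nu$ neither enlarges nor shrinks the class of negligible sets and that simple functions stay $\delta_0$-dense even when $\mu(\Omega)=\infty$; both points rest on the strict positivity and integrability of the weight $w$, which keep $\nu$-negligibility identical to $\mu$-negligibility and supply the integrable dominating function needed for the convergence $\delta_0(s_n,f)\to 0$. Once these are settled, the monotone-class and triangle-inequality estimates are routine.
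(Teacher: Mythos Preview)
Your argument is correct and is precisely the standard route: pass to the finite equivalent measure $\nu=w\,d\mu$, identify $(\nabla,\delta_\nabla)$ with the usual measure algebra metric $\tfrac12\nu(A\triangle B)$, run a monotone-class/approximation argument for $1\Leftrightarrow 2$, and use the isometric embedding $A\mapsto 1_A$ together with the $\delta_0$-density of simple functions (via dominated convergence with dominator $w$) for $2\Leftrightarrow 3$. The only caveat is that the paper's hypothesis ``$w\in\mathbf L_1^+$'' must be read as $w>0$ $\mu$-a.e.\ for $\delta_0$ to be a genuine metric inducing convergence in measure; you use this implicitly and correctly.

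As for comparison: the paper does not give its own proof of this theorem. It simply records the statement as a known result and refers the reader to Kantorovich--Akilov, \emph{Functional Analysis}, \S1.6. Your write-up is exactly the kind of argument one finds there, so there is nothing to contrast.
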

  See, for instance, \cite{KanAk}, $\S 1.6.$

Let $\mathcal{A}$ be a countable subalgebra of $\mathcal{F}_\mu$.
     We say that $\mathcal{A}$ separates points of $\Omega$ if for every pair  $\omega_1, \omega_2 \in \Omega$ there is
 $A \in \mathcal{A}$ such that $\omega_1 \in A$ and $\omega_2  \notin A$.

     If $\mathcal{A}$ does not separates points of $\Omega$ we can use the partition $\zeta=\zeta(\mathcal{A})$ of $\Omega$
 generated by $\mathcal{A}$,  where
 $$
   \omega_1\stackrel{\zeta}{\sim} \omega_2 \Longleftrightarrow \mbox{there is no}\;
   A \in \mathcal{A} \;\mbox{such that} \; \omega_1 \in A \;\mbox{and}\; \omega_2  \notin  A \;.
 $$
     The partition $\zeta$ consists of elements $\zeta(\omega)$, $\omega\in\Omega$, where $\zeta(\omega)$ is intersection of all $A$ such that $\omega\in A\in\mathcal{A}$.

A subset $F\subseteq \Omega$ is called a $\zeta$-set if $\zeta(\omega)\subseteq F$ for all $\omega\in F$

     We denote by $\mathcal{F}(\mathcal{A})^\mu$ the $\sigma$-algebra of all $\mu$-measurable  $\zeta$-sets.
     Obviously,
$$
 \mathcal{A} \subseteq \mathcal{F}(\mathcal{A}) \subseteq \mathcal{F}(\mathcal{A})^\mu \subseteq \mathcal{F}(\mathcal{A})_\mu \subseteq \mathcal{F}_\mu \;.
$$
 Clearly,  $\mathcal{F}(\mathcal{A})^\mu = \mathcal{F}(\mathcal{A})_\mu$ if $\mathcal{A}$ separates points of $\Omega$.

    A countable subalgebra $\mathcal{A} $ of  $\mathcal{F}$ is said to be {\it a countable base} of a measure space
 $(\Omega, \mathcal{F},\mu)$ if
 \begin{itemize}
  \item[$\bullet$]
   $\mathcal{A}$ generates $\mathcal{F}$  \ mod $\mu$, i.e. $\mathcal{F} \subseteq \mathcal{F}(\mathcal{A})_\mu$.
  \item[$\bullet$]
   $\mathcal{A}$ separates points of $\Omega$ \ mod $\mu$, i.e. $\mathcal{A}$ separates the points of some measurable
   subset $\Omega_0 \subseteq \Omega$ of full measure $\mu$.
 \end{itemize}

  Recall that a class $\mathcal{E}$ of subsets of $\Omega$ is called {\it a compact class} if for every sequence
 $\{C_n\}_{n\geq 1} \subset \mathcal{E}$,
$$
 \bigcap_{n\geq 1} C_n = \emptyset \;\Longleftrightarrow \; \bigcap_{n\leq N} C_n = \emptyset \;\mbox{for some}\; N \in \mathbf{N} \;,
$$
see, \cite{N}, sec. 1.6.

     A measure space $(\Omega, \mathcal{F},\mu)$ is called {\it a Lebesgue space} if there exists a countable base
 $\mathcal{A}$ of  $(\Omega, \mathcal{F},\mu)$ which is a compact class \ mod $\mu$, i.e. $\mathcal{A}$ is a compact class on some
 measurable subset $\Omega_0 \subseteq \Omega$ of full measure $\mu$.

We use here the definition of Lebesgue spaces from \cite{ViRuFe}.
   It is a little different but equivalent to the original definition from \cite{Ro$_1$}.

We need the following known result:
 \begin{itemize}
  \item[$\bullet$]
     Let $(X,d)$ be a Polish (complete separable metric) space and $\lambda$ be a Borel measure on $X$.
     The measure $\lambda$ is defined on the the $\sigma$-algebra $\mathcal{B}=\mathcal{B}(X)$ of all Borel subsets of $X$ and
   extended to the $\lambda$-completion $\mathcal{B}_\lambda$.
     Then the measure space $(X,\mathcal{B}_\lambda, \lambda)$ is a Lebesgue space.
     In particular, every standard space $(I,\mathcal{B}_m,m)$ is a Lebesgue space.
  \item[$\bullet$]
     Let $(\Omega, \mathcal{F},\mu)$ be a non-atomic Lebesgue space.
     Then there exists a measure preserving isomorphism $\phi$ between $(\Omega, \mathcal{F},\mu)$ and the
   corresponding standard space $(I,\mathcal{B}_m,m)$.
 \end{itemize}

 Note that there is no need for the usual correction \ mod $\mu$ in the latter statement since the
 both considered measure spaces are non-atomic.

   As a consequence we have:
  \begin{itemize}
  \item[$\bullet$]
    Let $\phi \colon (\Omega, \mathcal{F},\mu) \to (I,\mathcal{B}_m,m)$ be a measure preserving isomorphism between
 a non-atomic Lebesgue space $(\Omega,\mathcal{F},\mu)$ and the corresponding standard space $(I,\mathcal{B}_m,m)$.
    Then
 $$
 \Phi_\phi  \colon \mathbf{L}_0(\Omega,\mathcal{F},\mu) \ni f \to  f \circ \phi^{-1} \in \mathbf{L}_0(I,\mathcal{B}_m, m)
 $$
 is an algebraic, order, topological and integral preserving isomorphism desired in
 Theorem \ref{th isomorphism Phi Separable case}.
\end{itemize}

\subsection{Subspaces and factor-spaces}
 \label{ss Subspaces and factor-spaces}

Let $(\Omega, \mathcal{F},\mu)$ be a finite or infinite $\sigma$-finite measure space and $\Omega_0 \subseteq \Omega$ be
 an arbitrary (not necessarily measurable) subset of $\Omega$.

   We set
$$
     \mathcal{F}_0 := \{ A \cap \Omega_0 \colon A \in \mathcal{F}\}
$$
 and
$$
   \mu_0(A_0) = \mu(A) \;\mbox{if}\; A_0 = A \cap \Omega_0 \;,\; A  \in \mathcal{F} \;.
$$

   Let us assume that the subset $\Omega_0$ is of full outer measure $\mu^*$ in $\Omega$.
   Then the following statements hold:

\begin{itemize}
 \item[$\bullet$]
  $(\Omega_0, \mathcal{F}_0,\mu_0)$ is a measure space.
  The measure $\mu_0$ is finite, infinite $\sigma$-finite, or non-atomic iff the measure $\mu$ is so.
 \item[$\bullet$]
  $(\Omega, \mathcal{F},\mu)$ is a Lebesgue space iff $(\Omega_0, \mathcal{F}_0,\mu_0)$ is a Lebesgue
  space and $\Omega_0$ is $\mu$-measurable.
\end{itemize}

 We need also the following important fact:
  \begin{itemize}
   \item[$\bullet$]
    The restriction $ \Phi_0 \colon f \to f|_{\Omega_0}$ induces an algebraic, order, topological and integral
    preserving isomorphism between $ \mathbf{L}_0(\Omega,\mathcal{F},\mu)$ and $ \mathbf{L}_0(\Omega_0,\mathcal{F}_0,\mu_0)$.
  \end{itemize}
The latter result (being trivial if $\Omega_0$ is measurable) holds also for non-measurable subsets
 $\Omega_0$ of full outer measure $\mu^*$ in $\Omega$, see, for instance, \cite{Fr}, sec. 214 A-J.

Let  $(\Omega,\mathcal{F},\mu)$ be a complete separable measure space, $\mathcal{F} = \mathcal{F}_\mu$.
     Let the $\sigma$-algebra $\mathcal{F}$ is generated (mod $\mu$) by a countable subalgebra $\mathcal{A} \subset \mathcal{F}$,
 i.e. $\mathcal{F} \subseteq (\mathcal{F}(\mathcal{A}))_\mu$, where $(\mathcal{F}(\mathcal{A}))_\mu$ is the $\mu$-completion of the $\sigma$-algebra
  $ \mathcal{F}(\mathcal{A})$ generated by the subalgebra $\mathcal{A}$.

     Consider the partition $\zeta=\zeta(\mathcal{A})$ of $\Omega$ generated by
 the subalgebra $\mathcal{A}$ and the $\sigma$-algebra  $\mathcal{F}(\mathcal{A})^\mu$ of all $\mu$-measurable $\zeta$-sets.

     Recall that the factor-space $(\Omega/\zeta, \mathcal{F}/\zeta,\mu/\zeta)$ is defined by
$$
    \mathcal{F}/\zeta = \{A' \subseteq \Omega/\zeta \colon \pi_\zeta^{-1}(A') \in \mathcal{F}(\mathcal{A})^\mu\}
$$
 and
$$
   \mu/\zeta(A) = \mu(\pi_\zeta^{-1}(A')) \;,\; A' \in  \mathcal{F}/\zeta \;,
$$
 where $\pi_\zeta \colon \Omega \to \Omega/\zeta $
 denotes the natural projection.

    By the definition, the projection $\pi_\zeta$ is measure preserving and the restriction
 $$
      \pi_\zeta|_{\mathcal{F}(\mathcal{A})^\mu} \colon \mathcal{F}(\mathcal{A})^\mu \ni A \to \pi_\zeta(A) \in \mathcal{F}/\zeta
 $$
 is a measure preserving isomorphism between the $\sigma$-algebras.

  Moreover, by the definition,  the countable subalgebra $\mathcal{A}' = \pi_\zeta(\mathcal{A})$ is a countable base of
 the separable measure space $(\Omega/\zeta, \mathcal{F}/\zeta,\mu/\zeta)$.

     On the other hand, since $(\mathcal{F}(\mathcal{A})^\mu)_\mu = \mathcal{F}(\mathcal{A})_\mu =\mathcal{F}_\mu$, the mapping $\pi_\zeta$ induces
 an algebraic, order, topological and integral preserving isomorphism $\Phi_\zeta$ between
 $ \mathbf{L}_0(\Omega,\mathcal{F}_\mu,\mu)$ and $ \mathbf{L}_0(\Omega/\zeta,\mathcal{F}/\zeta,\mu/\zeta)$ such that
 $$
   \Phi_\zeta^{-1} \colon \mathbf{L}_0(\Omega/\zeta,\mathcal{F}/\zeta,\mu/\zeta) \ni f \to f  \in \mathbf{ L}_0(\Omega,\mathcal{F},\mu)\;.
 $$

\subsection{The Stone representation}
 \label{ss The Stone representation}

     We will show now that every measure space $(\Omega,\mathcal{F},\mu)$ with a countable base $\mathcal{A}$ can be
 identified with subspace of full outer measure in a suitable Lebesgue space
 $(\mathcal{S},\mathcal{B}(\mathcal{S})_{\mu_\mathcal{S}},\mu_\mathcal{S})$.
     We may assume without loss of the generality that $\mu(\Omega) =1$ by passing to a probability measure
 $\mu'$ which is equivalent to $\mu$.

  To this aim we consider the set algebra $\mathcal{A}$ as an abstract Boolean algebra and use its Stone representation (see. \cite{St}).

  Recall that the Stone space $\mathcal{S} = \mathcal{S}(\mathcal{A})$ consists of all ultrafilters $U$ of the Boolean algebra
  $\mathcal{A}$.
  A subset $U \subseteq \mathcal{A}$ is called a ultrafilter if for all $A,B \in \mathcal{A}$,

\begin{itemize}
 \item[$\bullet$]
  $\;\emptyset \notin U$.
 \item[$\bullet$]
  $\;   B \in U, \   B\subseteq A  \Longrightarrow  A \in U$.
 \item[$\bullet$]
  $\;   A \in U,  B \in U  \Longrightarrow  A \cap B \in U$.
 \item[$\bullet$]
  $\; A \in U \Longrightarrow A^\complement \notin U $, where $A \rightarrow A^\complement$ denotes the complement operation in $\mathcal{A}$.
\end{itemize}

     We set $u_\mathcal{A}(A) = \{ U \in \mathcal{S} \colon A \in U \}$ and define the Stone representation
 $$
 u^\mathcal{A} \colon  \mathcal{A} \ni A \to  u^\mathcal{A}(A) \in u_\mathcal{A}(\mathcal{A})\subseteq \mathcal{S}
 $$
 of the Boolean algebra $\mathcal{A}$ as the set algebra $u_\mathcal{A}(\mathcal{A})$ on the space $\mathcal{S} = \mathcal{S}(\mathcal{A})$.

  The set algebra $u^\mathcal{A}(\mathcal{A})$ separates points in $\mathcal{S}$ and it can be taken as a base of a natural
  Stone topology on $\mathcal{S}$.

   We shall use the Stone construction only in the case when the Boolean algebra $\mathcal{A}$ is countable.
  In this case:
\begin{itemize}
 \item[$\bullet$]
  $\; \mathcal{S}$ is a compact separable totally disconnected metrizable topological space.
 \item[$\bullet$]
  $\; u^\mathcal{A}$ is an algebraic isomorphism between the Boolean algebras $\mathcal{A}$ and the algebra $\mathcal{K}(\mathcal{S})$
  of all open-closed subsets of $\mathcal{S}$.
   \item[$\bullet$]
    For every $\omega \in \Omega$ the set $ U_\omega := \{A \in \mathcal{A} : \omega \in A \} \subset \mathcal{A}$
    is a ultrafilter
    and the corresponding mapping $ \phi_\mathcal{A} \colon \Omega \ni \omega \to U_\omega \in \mathcal{S}$ is injective.
   \item[$\bullet$]
    $\;\phi_\mathcal{A} (A) =u^\mathcal{A}(A)$ for all  $A \in \mathcal{A}$ and the mapping $ \phi_\mathcal{A}$ is bijective iff $\mathcal{A}$
    is a compact class on $\Omega$.
\end{itemize}

 Further, since $\mathcal{K}=\mathcal{K}(\mathcal{S})$ is a compact class in $\mathcal{S}$ we have
\begin{itemize}
  \item[$\bullet$]
   The measure $\mu \circ \phi^{-1}$ on $\mathcal{K}$ can be extended to a Borel measure $\mu_\mathcal{S}$ on $\mathcal{B}(\mathcal{S})$
   and the completion $(\mathcal{S}, \mathcal{B}(\mathcal{S})_{\mu_\mathcal{S}},\mu_\mathcal{S})$ is a Lebesgue space with a compact base $\mathcal{K}$.
  \item[$\bullet$]
    The embedding $\phi_\mathcal{A}\colon \Omega \to \mathcal{S}$ is measure preserving and $\phi_\mathcal{A}(\Omega)$ is of full
    outer measure $\mu_\mathcal{S}^*$ in $\mathcal{S}$.
  \item[$\bullet$]
   The image $\phi_\mathcal{A}(\Omega)$  is
 measurable iff $(\Omega,\mathcal{F},\mu)$  is a Lebesgue space.
\end{itemize}

  Since the image $\phi_{\mathcal{A}}(\Omega)$  is of full outer measure $\mu^*$, the mapping $\phi_\mathcal{A}$ induces an algebraic, order, topological and integral preserving
 isomorphism $\Phi_\mathcal{A}$ between $ \mathbf{L}_0(\Omega,\mathcal{F}_\mu,\mu)$ and $(\mathcal{S}, \mathcal{B}(\mathcal{S})_{\mu_\mathcal{S}},\mu_\mathcal{S})$ such that
 $$
   \Phi_\mathcal{A}^{-1} \colon (\mathcal{S}, \mathcal{B}(\mathcal{S})_{\mu_\mathcal{S}},\mu_\mathcal{S}) \ni f \to
   f \circ \phi_\mathcal{A} \in \mathbf{L}_0(\Omega,\mathcal{F},\mu)\;.
 $$

\begin{remark}
    The Stone representation may be applied for any (not necessarily countable) subalgebras $\mathcal{A}$ of $\mathcal{F}$.
    We, however, use it only in the case when the Boolean algebra is countable.

    If $\mathcal{A}$ is not countable the corresponding compact totally disconnected space $ \mathcal{S}$  may be non-separable and non-metrizable.
    In this case, the corresponding measure space $(\mathcal{S}, \mathcal{B}(\mathcal{S})_{\mu_{\mathcal{S}}},\mu_{\mathcal{S}})$ is not a Lebesgue space, although $\mathcal{K}=\mathcal{K}(\mathcal{S})$ remains
 to be a compact class and a non-countable base of $\mathcal{S}$.
\end{remark}

\subsection{Construction of $\Phi$}
 \label{ss Construction of $Phi$}

Combining the above results we can construct the isomorphism  $\Phi$ desired in
  Theorem \ref{th isomorphism Phi Separable case}.

      Indeed, let  $(\Omega, \mathcal{F},\mu)$ be a separable measure space with finite or infinite $\sigma$-finite
  non-atomic measure $\mu$ and let $(I,\mathcal{B}_m, m)$ be the corresponding standard measure space.
      Passing to the $\mu$-completion we may assume that $\mathcal{F} = \mathcal{F}_\mu$.

      We choose and fix a countable subalgebra $\mathcal{A} \subset \mathcal{F}$ which generates $\mathcal{F}$ (mod $\mu$),
  and define a measure preserving mappings:
\begin{itemize}
  \item[$\bullet$]
   $\; \pi_\zeta\colon (\Omega, \mathcal{F},\mu) \to (\Omega/\zeta, \mathcal{F}/\zeta,\mu/\zeta)$, where $\zeta=\zeta(\mathcal{A})$ is the
   partition of $\Omega$ generated by the algebra $\mathcal{A}$.
  \item[$\bullet$]
   $\;  \phi_{\mathcal{A}'} \colon (\Omega/\zeta, \mathcal{F}/\zeta,\mu/\zeta) \to (\mathcal{S}, \mathcal{B}(\mathcal{S})_{\mu_\mathcal{S}},\mu_\mathcal{S})$, where
   $\mathcal{A}' = \pi_\zeta(\mathcal{A})$ is a countable base of the factor-space $(\Omega/\zeta, \mathcal{F}/\zeta,\mu/\zeta)$.
  \item[$\bullet$]
   $\; \phi_{S,I} \colon (\mathcal{S}, \mathcal{B}(\mathcal{S})_{\mu_\mathcal{S}},\mu_\mathcal{S}) \to (I,\mathcal{B}_m, m)$, which an isomorphism
   between these Lebesgue spaces.
\end{itemize}

Using the composition $\pi_\zeta \circ \phi_{\mathcal{A}'} \circ \phi_{S,I}$ we have proved
 \begin{theorem}
 \label{th isomorphism phi_cA}
    Let $(\Omega, \mathcal{F}_\mu,\mu)$ be a separable $\sigma$-finite non-atomic measure space and $\mathcal{A}$ be a
 countable subalgebra of $\mathcal{F}_\mu$ such that $\mathcal{F}(\mathcal{A})_\mu = \mathcal{F}_\mu$.
    Let $(I, \mathcal{B}(I)_m, m)$ be the corresponding to $(\Omega,\mathcal{F},\mu)$ standard measure space.

     Then there exist a subset $J$ of $I$ of full outer measure $m^*$ in $I$ and a measure preserving
  mapping
$$
        \phi_\mathcal{A}\colon (\Omega, \mathcal{F}_\mu, \mu) \to (J, \mathcal{F}_J, m_J)
$$
 such that the restriction
$$
 \phi|_{\mathcal{F}^\mu(\mathcal{A})} \colon (\Omega, \mathcal{F}^\mu(\mathcal{A}), \mu|_{\mathcal{F}^\mu(\mathcal{A})}) \to (J, \mathcal{F}_J, m_J)
$$
 is a measure preserving isomorphism between the space $(\Omega, \mathcal{F}^\mu(\mathcal{A}), \mu|_{\mathcal{F}^\mu(\mathcal{A})}) $ and
 the measure subspace $(J,\mathcal{F}_J, m_J)$ induced by $(I, \mathcal{B}(I)_m, m)$ on $J$.
\end{theorem}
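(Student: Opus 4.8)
The plan is to obtain $\phi_\mathcal{A}$ simply as the composition of the three measure preserving maps already constructed in Sections \ref{ss Subspaces and factor-spaces}, \ref{ss The Stone representation} and \ref{ss Lebesgue spaces}, and then to read off the asserted properties by chaining the corresponding properties of the three factors. Concretely, I would set $\phi_\mathcal{A} := \phi_{S,I} \circ \phi_{\mathcal{A}'} \circ \pi_\zeta$ (applied in the order $\pi_\zeta$, then $\phi_{\mathcal{A}'}$, then $\phi_{S,I}$), where $\pi_\zeta \colon (\Omega,\mathcal{F}_\mu,\mu) \to (\Omega/\zeta,\mathcal{F}/\zeta,\mu/\zeta)$ is the projection onto the factor-space by the partition $\zeta = \zeta(\mathcal{A})$; the map $\phi_{\mathcal{A}'}$ is the Stone embedding of the factor-space into its Stone Lebesgue space $(\mathcal{S},\mathcal{B}(\mathcal{S})_{\mu_\mathcal{S}},\mu_\mathcal{S})$ attached to the countable base $\mathcal{A}' = \pi_\zeta(\mathcal{A})$; and $\phi_{S,I}$ is a measure preserving isomorphism between this non-atomic Lebesgue space and the standard space $(I,\mathcal{B}_m,m)$, which exists by the classification of non-atomic Lebesgue spaces recalled in Section \ref{ss Lebesgue spaces}. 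Each factor is measure preserving, so the composition is measure preserving.

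Next I would define $J := \phi_\mathcal{A}(\Omega)$ and equip it with the subspace structure $(\mathcal{F}_J,m_J)$ induced by $(I,\mathcal{B}_m,m)$ as in Section \ref{ss Subspaces and factor-spaces}. Since $\pi_\zeta$ is surjective onto $\Omega/\zeta$, we have $J = \phi_{S,I}\big(\phi_{\mathcal{A}'}(\Omega/\zeta)\big)$. By Section \ref{ss The Stone representation} the image $\phi_{\mathcal{A}'}(\Omega/\zeta)$ is of full outer measure $\mu_\mathcal{S}^*$ in $\mathcal{S}$; because $\phi_{S,I}$ is a genuine pointwise measure preserving isomorphism of completed measure spaces (no correction mod $\mu$ is needed, both spaces being non-atomic), it carries sets of full outer measure to sets of full outer measure, so $J$ is of full outer measure $m^*$ in $I$. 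Thus $\phi_\mathcal{A} \colon (\Omega,\mathcal{F}_\mu,\mu) \to (J,\mathcal{F}_J,m_J)$ is a measure preserving map onto its image.

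It then remains to check that the restriction $\phi_\mathcal{A}|_{\mathcal{F}^\mu(\mathcal{A})}$ is a measure preserving \emph{isomorphism} onto $(J,\mathcal{F}_J,m_J)$, and here I would again chain the three factors: $\pi_\zeta|_{\mathcal{F}(\mathcal{A})^\mu}$ is a measure preserving isomorphism onto $\mathcal{F}/\zeta$ by Section \ref{ss Subspaces and factor-spaces}; $\phi_{\mathcal{A}'}$ identifies $\mathcal{F}/\zeta$ with the trace of $\mathcal{B}(\mathcal{S})_{\mu_\mathcal{S}}$ on $\phi_{\mathcal{A}'}(\Omega/\zeta)$ by Section \ref{ss The Stone representation}; and $\phi_{S,I}$ transports this trace onto $\mathcal{F}_J$. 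The composite is therefore an isomorphism of the required $\sigma$-algebras that preserves the measure.

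The step I expect to be the main obstacle is not the algebra of the composition but the careful bookkeeping around the \emph{non-measurable} image. The embedded copy $\phi_{\mathcal{A}'}(\Omega/\zeta)$, and hence $J$, is measurable only when $(\Omega,\mathcal{F},\mu)$ is itself a Lebesgue space, so in general one must work with the induced subspace on a set of full outer measure; the crucial technical input is the result recalled in Section \ref{ss Subspaces and factor-spaces} that restriction to a subset of full outer measure yields a genuine (non-atomic, $\sigma$-finite) measure space whose trace $\sigma$-algebra and induced measure behave functorially under isomorphisms. A secondary point requiring attention is the infinite case: the Stone extension of Section \ref{ss The Stone representation} is phrased via a probability measure equivalent to $\mu$, so one must confirm that $\mu$ itself, being $\sigma$-finite and non-atomic, extends along the compact class $\mathcal{K}$ to a $\sigma$-finite measure $\mu_\mathcal{S}$ on $\mathcal{S}$, so that $\phi_{S,I}$ is available as an isomorphism onto the infinite standard space $I = [0,\infty)$.
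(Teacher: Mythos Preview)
Your proposal is correct and follows essentially the same approach as the paper: the authors obtain $\phi_\mathcal{A}$ precisely as the composition $\pi_\zeta$, $\phi_{\mathcal{A}'}$, $\phi_{S,I}$ of the three maps from Sections \ref{ss Subspaces and factor-spaces}, \ref{ss The Stone representation} and \ref{ss Lebesgue spaces}, and then record the explanatory remarks you spell out (non-measurability of $J$, identification via the full-outer-measure restriction) in the bullets following the theorem. Your write-up is in fact more detailed than the paper's, which consists of little more than naming the three factors and declaring the composition; your attention to the full-outer-measure bookkeeping and the $\sigma$-finite extension in the Stone step is exactly what the paper leaves implicit.
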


 The measure preserving mapping $\phi_\mathcal{A}$ determines the algebraic, order, topological and integral
 preserving isomorphism
 $$
 \Phi_\mathcal{A} \colon \mathbf{L}_0(\Omega,\mathcal{F},\mu) \to \mathbf{L}_0(I,\mathcal{B}_m, m) \;,
 $$
 desired in Theorem \ref{th isomorphism Phi Separable case}.

   The proposition requires some explanation.
 \begin{itemize}
  \item[$\bullet$]
   The separable measure spaces $(\Omega,\mathcal{F},\mu)$ and $(\Omega/\zeta, \mathcal{F}/\zeta,\mu/\zeta)$ are not isomorphic
   if the subalgebra $\mathcal{A}$ does not separates the points of $\Omega$.
   Nevertheless, the spaces $\mathbf{L}_0(\Omega,\mathcal{F},\mu) = \mathbf{L}_0(\Omega,\mathcal{F}^\mu,\mu)$ and
   $\mathbf{L}_0(\Omega/\zeta, \mathcal{F}/\zeta,\mu/\zeta)$ can be identified by means of the projection $\pi_\zeta$.
  \item[$\bullet$]
   The separable measure spaces $(\Omega/\zeta, \mathcal{F}/\zeta,\mu/\zeta)$ and $(J, \mathcal{F}_J, m_J)$ are isomorphic.
   They have countable bases but need not be Lebesgue spaces in general.
  \item[$\bullet$]
    The subset $J \subseteq I$ need not be measurable and $(J,\mathcal{F}_J, m_J)$ need not be a Lebesgue space.
    Nevertheless, the spaces $\mathbf{L}_0(I,\mathcal{B}_m, m)$ and $\mathbf{L}_0(J,\mathcal{F}_J, m_J)$ can be identified by means of
    the restriction $f \to f|_J$.
   \end{itemize}

\section{Symmetric spaces on general measure spaces}
\label{s Symmetric spaces on general measure spaces}

\subsection{The proof of Theorems \ref{th main theorem 1}}
 \label{ss The proof of theorems 1}

    Let $(\Omega, \mathcal{F}_\mu, \mu)$ be a general (not necessarily separable) non-atomic measure space and
 let $(I,\mathcal{B}_m, m)$ be the corresponding standard measure space.

    Let use denote by $\mathfrak{A}$ the class of all subalgebras $\mathcal{A} \subset \mathcal{F}$ such
   that
 \begin{itemize}
   \item[$\bullet$]
    The subalgebras $\mathcal{A}$ is countable,
   \item[$\bullet$]
    The corresponding separable measure space $(\Omega, \mathcal{F}(\mathcal{A})_\mu, \mu|_{\mathcal{F}(\mathcal{A})_\mu})$ is non-atomic,
   \item[$\bullet$]
    The restricted measure $\mu|_{\mathcal{F}(\mathcal{A})_\mu}$ is $\sigma$-finite.
  \end{itemize}

  Recall that the measure $\mu$ itself is assumed to be non-atomic and $\sigma$-finite.

      To reduce notation we write: $ \mathcal{F}_\mathcal{A} = \mathcal{F}(\mathcal{A})_\mu, \  \mu_\mathcal{A} = \mu|_{\mathcal{F}(\mathcal{A})_\mu}$
 for $\mathcal{A} \in \mathfrak{A}$.

For every $\mathcal{A} \in \mathfrak{A}$ we choose and fix an algebraic, order, topological and integral
 preserving isomorphism
 $$
 \Phi_\mathcal{A}  \colon \mathbf{L}_0(\Omega_\mathcal{A},\mathcal{F}_\mathcal{A},\mu_\mathcal{A}) \to \mathbf{L}_0(I,\mathcal{B}_m, m) \;,
 $$
 provided by  Theorem \ref{th isomorphism Phi Separable case}.

  Let $\mathbf{E}_0 = \mathbf{E}_0(I,\mathcal{B}_m, m)$ be a standard symmetric space and
 $$
 \Xi_0 = \Xi(\mathbf{E}_0) \;,\; \|\cdot\|_0 = \|\cdot\|_{\Xi(\mathbf{E}_0)} \;.
 $$
     Since $ \Phi_\mathcal{A}$ preserves distribution functions, the space $\mathbf{E}_\mathcal{A} := \Phi_\mathcal{A}^{-1}(\mathbf{E}_0)$ is a
 symmetric space on the separable measure space $(\Omega_\mathcal{A},\mathcal{F}_\mathcal{A},\mu_\mathcal{A})$, and
 $\mathbf{E}_\mathcal{A} = \mathbf{E}_\mathcal{A}(\Omega_\mathcal{A},\mathcal{F}_\mathcal{A},\mu_\mathcal{A})$ is the only symmetric space on $(\Omega_\mathcal{A},\mathcal{F}_\mathcal{A},\mu_\mathcal{A})$,
 which is equimeasurable with $\mathbf{E}_0$.
    So we have, $ (\Xi_\mathcal{A}, \|\cdot\|_\mathcal{A}) = (\Xi_0, \|\cdot\|_0)$ for each
 $\mathcal{A} \in \mathfrak{A}$.

 Turning to the measure space $(\Omega,\mathcal{F},\mu)$ consider the space
 $(\mathbf{E}_\mathcal{A}(\Omega,\mathcal{F},\mu),\|\cdot\|_{\mathbf{E}_\mathcal{A}(\Omega,\mathcal{F},\mu)})$ defined by
 (\ref{eq E main theorem 1}) and (\ref{eq norm main theorem 1}).

   Since the measure space $(\Omega,\mathcal{F},\mu)$ is non-atomic and
   $\sigma$-finite,
$$
      \mathcal{F}_\mu = \bigcup_{\mathcal{A} \in \mathfrak{A}}\mathcal{F}_\mathcal{A} \;,
$$
 and hence,
$$
     \mathbf{ L}_0(\Omega,\mathcal{F}_\mu, \mu) = \bigcup_{\mathcal{A} \in \mathfrak{A}}  \mathbf{L}_0(\Omega, \mathcal{F}_\mathcal{A}, \mu_\mathcal{A})\;.
$$
    This equality implies that
$$
      \mathbf{E}(\Omega, \mathcal{F}_\mu, \mu) = \bigcup_{\mathcal{A} \in \mathfrak{A}}  \mathbf{E}_\mathcal{A}(\Omega, \mathcal{F}_\mathcal{A},\mu_\mathcal{A})\;.
$$
      Here, each $\mathbf{E}_\mathcal{A}(\Omega, \mathcal{F}_\mathcal{A},\mu_\mathcal{A})$ is a symmetric space on its own measure space
 $(\Omega_\mathcal{A},\mathcal{F}_\mathcal{A},\mu_\mathcal{A})$, whence their union is a symmetric space on $(\Omega,\mathcal{F},\mu)$.
     Moreover,
$$
  \Xi(\mathbf{E}(\Omega, \mathcal{F}_\mu, \mu)) =\Xi(\mathbf{E}_\mathcal{A}(\Omega, \mathcal{F}_\mathcal{A},\mu_\mathcal{A})) =
  \Xi(\mathbf{E}_0(I,\mathcal{B}_m, m)) = \Xi_0  \;\mbox{for all}\; \mathcal{A}\in \mathfrak{A}  \;.
$$
     For every $f \in \mathbf{E}(\Omega, \mathcal{F}_\mu, \mu)$ there exists $\mathcal{A} \in \mathfrak{A}$ such that
  $f \in \mathbf{E}(\Omega, \mathcal{F}_\mathcal{A}, \mu_\mathcal{A})$ and
$$
 \|f\|_{\mathbf{E}(\Omega, \mathcal{F}_\mu, \mu)} =  \|f\|_{\mathbf{E}(\Omega,\mathcal{F}_\mathcal{A}, \mu_\mathcal{A})}=
 \|\xi_{f,\mu}\|_{\mathbf{E}_0(I,\mathcal{B}_m, m)} = \|\xi_{f,\mu}\|_0 \;.
$$
    Thus all the symmetric spaces $\mathbf{E}(\Omega, \mathcal{F}_\mu, \mu),  \mathbf{E}(\Omega, \mathcal{F}_\mathcal{A}, \mu_\mathcal{A})$
 and $\mathbf{E}_0(I,\mathcal{B}_m, m)$ are equimeasurable.

    The proof of Theorem \ref{th main theorem 1} is completed.

 \subsection{The proof of Theorems \ref{th main theorem 2}.}
 \label{ss The proof of theorems 2}

     Let now  $\mathbf{E}(\Omega, \mathcal{F}_\mu, \mu)$ be a symmetric space on a non-atomic finite or infinite $\sigma$-finite
 measure space $\mathbf{E}(\Omega, \mathcal{F}_\mu, \mu)$ and $(I,\mathcal{B}_m, m)$ be the corresponding standard measure space.
    Let also  $ \mathbf{E}(I,\mathcal{B}_m, m)$ and $\|\cdot\|_{\mathbf{E}(I,\mathcal{B}_m,m)}$ are defined by the equalities
 (\ref{eq E main theorem 2}) and (\ref{eq norm main theorem 2}).

 Choosing a fixed countable subalgebra $\mathcal{A}_1 \in \mathfrak{A}$ we consider the separable measure space
  $(\Omega, \mathcal{F}_1, \mu_1) = (\Omega, \mathcal{F}_{\mathcal{A}_1}, \mu_{\mathcal{A}_1})$ and the intersection
$$
    \mathbf{E}_1 =  \mathbf{E}_1(\Omega, \mathcal{F}_1, \mu_1) := \mathbf{E}(\Omega, \mathcal{F}_\mu, \mu) \cap  \mathbf{L}_0(\Omega, \mathcal{F}_{\mathcal{A}_1}, \mu_{\mathcal{A}_1}) \;.
$$
     Then the norm (or quasinorm) $ \|\cdot\|_{\mathbf{E}(\Omega, \mathcal{F}_\mu, \mu)}$ induces a norm (or quasinorm)
 $\|\cdot\|_1 = \|\cdot\|_{\mathbf{E}_1\Omega, \mathcal{F}_1, \mu_1)} $ on $\mathbf{E}_1$ and $(\mathbf{E}_1, \|\cdot\|_1)$ is a symmetric space on
 $(\Omega, \mathcal{F}_1, \mu_1) $.

     By Theorem \ref{th isomorphism Phi Separable case} there exists an algebraic, order, topological and integral preserving isomorphism
$$
 \Phi_1  \colon \mathbf{L}_0(\Omega, \mathcal{F}_1,\mu_1) \to   \mathbf{L}_0(I,\mathcal{B}_m, m)
$$
 such that the restriction $\Phi_1|_{\mathbf{E}_1}$ is an isometric isomorphism between the symmetric space $\mathbf{E}_1 =\mathbf{E}_1(\Omega, \mathcal{F}_1,\mu_1)$
 and its equimeasurable standard symmetric space $\mathbf{E}_1(I,\mathcal{B}_m, m)$.

    Obviously,
$$
 \mathbf{E}_1(\Omega, \mathcal{F}_1, \mu_1) \subseteq \mathbf{E}(\Omega, \mathcal{F}_\mu, \mu) \;\Longleftrightarrow\; \mathbf{E}_1(I,\mathcal{B}_m, m) \subseteq \mathbf{E}(I,\mathcal{B}_m, m) \;.
$$
      On the other hand, let $g \in \mathbf{E}(I,\mathcal{B}_m, m)$, i.e. $\xi_{g,m} = \xi_{f,\mu}$ for some $f \in \mathbf{E}(\Omega, \mathcal{F}_\mu, \mu)$.
      Since there exists $\mathcal{A} \in \mathfrak{A}$ such that $f \in \mathbf{L}_0(\Omega, \mathcal{F}_\mathcal{A}, \mu_\mathcal{A})$. we can use the isomorphism
$$
 \Phi_\mathcal{A}  \colon \mathbf{L}_0(\Omega, \mathcal{F}_\mathcal{A},\mu_\mathcal{A}) \to   \mathbf{L}_0(I,\mathcal{B}_m, m)
$$
 and set $f_1 = \Phi^{-1}_1 (\Phi_\mathcal{A} (f))$.
     Then $f_1 \in \mathbf{E}_1(\Omega, \mathcal{F}_1, \mu_1)$ and $\xi_{f_1,\mu} = \xi_{f,\mu} = \xi_{g,m}$, i.e. $g \in E_1(I,\mathcal{B}_m, m)$.

     Thus $\mathbf{E}(I,\mathcal{B}_m,m) = \mathbf{E}_1(I,\mathcal{B}_m, m)$. Whence $\mathbf{E}(I,\mathcal{B}_m,m)$ is a standard symmetric space since $\mathbf{E}_1(I,\mathcal{B}_m, m)$ is so.
     The spaces $\mathbf{E}(\Omega, \mathcal{F}_\mu, \mu), \ \mathbf{E}(I,\mathcal{B}_m, m)$ and $(\Omega, \mathcal{F}_\mathcal{A},\mu_\mathcal{A})$,  $\mathcal{A} \in \mathfrak{A}$, \ are strictly  equimeasurable.

    The proof of Theorem \ref{th main theorem 2} is completed.

\subsection{Non-separable measure spaces}
 \label{ss Non-separable measure spaces}

    Simple non-separable measure spaces $(\Omega, \mathcal{F},\mu)$ can be constructed in the following way.
$$
 (\Omega^\Upsilon,\mathcal{F}^\Upsilon,\mu^\Upsilon) = (I,\mathcal{B}_m, m) \times \prod_{\upsilon \in \Upsilon} (\Omega_\upsilon,\mathcal{F}_\upsilon,\mu_\upsilon)
$$
 where $(\Omega_\upsilon,\mathcal{F}_\upsilon,\mu_\upsilon) = (\{0,1\}, 2^{\{0,1\}}, (1/2, 1/2))$ for all $\upsilon \in \Upsilon$.

    Choosing the index sets $\Upsilon$ with various cardinalities, we obtain various measure spaces $ (\Omega^\Upsilon,\mathcal{F}^\Upsilon,\mu^\Upsilon)$ and
 various symmetric spaces $\mathbf{E}(\Omega^\Upsilon,\mathcal{F}^\Upsilon,\mu^\Upsilon)$ equimeasurable to the same standard symmetric spaces $\mathbf{E}(I,\mathcal{B}_m, m)$.

     On the other hand, for any countable subset $\Upsilon_0$ of $\Upsilon$,  the natural projection
$$
    \pi_0 \colon (\Omega^\Upsilon,\mathcal{F}^\Upsilon,\mu^\Upsilon) \to  (\Omega^{\Upsilon_0},\mathcal{F}^{\Upsilon_0},\mu^{\Upsilon_0})
    = (I,\mathcal{B}_m, m) \times \prod_{\upsilon \in \Upsilon} (\Omega_\upsilon,\mathcal{F}_\upsilon,\mu_\upsilon)
$$
 generates a separable measure space $(\Omega^\Upsilon,\mathcal{F}^{\Upsilon_0},\mu^{\Upsilon_0})$, where
 $\mathcal{F}^\Upsilon_0 = \{\pi_0^{-1} A_0, \ A_0 \in \mathcal{F}^{\Upsilon_0}\}$ and $\mu^\Upsilon_0 = \mu^\Upsilon|_{\mathcal{F}^\Upsilon_0}$.

    Then for arbitrary choice of the countable subset $\Upsilon_0$, the symmetric space
$$
\mathbf{E}(\Omega^\Upsilon,\mathcal{F}^\Upsilon,\mu^\Upsilon) \cap \mathbf{L}_0(\Omega^\Upsilon,\mathcal{F}^\Upsilon_0,\mu^\Upsilon_0)
$$
 is isometrically isomorphic to the same standard symmetric spaces \ $\mathbf{E}(I,\mathcal{B}_m, m)$. \  However, it need not be necessarily isomorphic to the original symmetric spaces $\mathbf{E}(\Omega^\Upsilon,\mathcal{F}^\Upsilon,\mu^\Upsilon)$ if
 $\Upsilon$ is uncountable.

\section{Some consequences and applications}
\label{s Some consequences and applications}

\subsection{The equality $\xi_{f,\mu} = f \circ \phi$}
 \label{ss The equality xifm = f circ phi}

As a straightforward consequence of  Theorem \ref{th isomorphism Phi Separable case} we are able to obtain the following useful result.

\begin{theorem}
 \label{th xifm = f circ phi}
  Let $f \in \mathbf{L}_0^\xi(\Omega,\mathcal{F}_\mu, \mu) $ be a measurable nonnegative function such that $\xi_{f,\mu} (\infty) =0$.
  Then there exists a measure preserving mapping $\phi \colon (\Omega,\mathcal{F}_\mu, \mu) \to (I,\mathcal{B}_m, m)$ such that $\xi_{f,\mu}  =f \circ \phi$.
\end{theorem}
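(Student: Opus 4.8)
The plan is to reduce to the separable situation handled by Theorem \ref{th isomorphism Phi Separable case} and then straighten the transported function into its rearrangement on the standard space. One preliminary remark on the statement: since $\phi$ maps $\Omega$ into $I$ while $f$ lives on $\Omega$ and $\xi_{f,\mu}$ on $I$, the type-consistent reading of the conclusion is $f = \xi_{f,\mu}\circ\phi$, and it is this identity I would establish. The hypothesis $\xi_{f,\mu}(\infty)=0$, i.e. $f\in\mathbf{L}_0^\xi(\Omega,\mathcal{F}_\mu,\mu)$, guarantees that $\xi_{f,\mu}$ is an a.e.-finite measurable function on $(I,\mathcal{B}_m,m)$ with $\eta_{f,\mu}(y)<\infty$ for every $y>0$, so the composition is meaningful.

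\emph{Separable reduction.} First I would pick a countable subalgebra $\mathcal{A}\subset\mathcal{F}_\mu$ that makes $f$ measurable, e.g. the algebra generated by $\{\{f>q\}\colon q\in\mathbf{Q},\ q>0\}$. The $\sigma$-algebra generated by $f$ can be atomic, but only on the level sets $\{f=c\}$ of positive measure, of which there are at most countably many. Using that $(\Omega,\mathcal{F}_\mu,\mu)$ is non-atomic, I would enlarge $\mathcal{A}$ by adjoining countably many sets that cut each such level set into arbitrarily fine non-atomic pieces, obtaining a countable $\mathcal{A}$ with $f$ still $\mathcal{A}$-measurable and with $(\Omega,\mathcal{F}_\mathcal{A},\mu_\mathcal{A})$ non-atomic and $\sigma$-finite, so that $\mathcal{A}\in\mathfrak{A}$. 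Since the distribution of $f$ is unaffected by restricting the measure, $\xi_{f,\mu_\mathcal{A}}=\xi_{f,\mu}$.

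\emph{Transport to the standard space.} Theorems \ref{th isomorphism Phi Separable case} and \ref{th isomorphism phi_cA} then furnish a measure preserving mapping $\phi_\mathcal{A}\colon(\Omega,\mathcal{F}_\mathcal{A},\mu_\mathcal{A})\to(I,\mathcal{B}_m,m)$ (onto a subset of full outer measure, which suffices after the identification of $\mathbf{L}_0$ spaces over full-outer-measure subsets from Subsection \ref{ss Subspaces and factor-spaces}), whose induced isomorphism satisfies $\Phi_\mathcal{A}^{-1}(h)=h\circ\phi_\mathcal{A}$ and preserves distribution functions. Setting $h:=\Phi_\mathcal{A}(f)\in\mathbf{L}_0(I,\mathcal{B}_m,m)$ gives $f=h\circ\phi_\mathcal{A}$ and $\xi_{h,m}=\xi_{f,\mu_\mathcal{A}}=\xi_{f,\mu}$; thus $h$ is equimeasurable with $\xi_{f,\mu}$ but is not yet the rearrangement itself.

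\emph{Straightening on $I$, and the main obstacle.} It remains to produce a measure preserving $\psi\colon(I,\mathcal{B}_m,m)\to(I,\mathcal{B}_m,m)$ with $\xi_{f,\mu}\circ\psi=h$; then $\phi:=\psi\circ\phi_\mathcal{A}$ is measure preserving and $\xi_{f,\mu}\circ\phi=h\circ\phi_\mathcal{A}=f$, as desired. Writing $\eta:=\eta_{h,m}=\eta_{f,\mu}$, the map $\psi$ is the monotone transport of $h$: on the part of $I$ where the distribution of $h$ is continuous I would take $\psi=\eta\circ h$, while on each positive-measure level set $\{h=c\}$ I would choose a measure preserving map onto the flat segment $[\eta(c),\eta(c^-))$ on which $\xi_{f,\mu}$ equals $c$, and splice these together. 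Such a map on each level set exists because $(I,\mathcal{B}_m,m)$ is a non-atomic Lebesgue space, so any two of its Borel subsets of equal finite measure are isomorphic. This straightening across the flat segments of $\xi_{f,\mu}$ — equivalently the values $c$ with $\mu(\{f=c\})>0$, where $f$ is constant on a positive-measure set and must be spread uniformly along the segment — is the only nontrivial point and is exactly where non-atomicity, through the Lebesgue-space classification, is essential; everything else is bookkeeping with the isomorphisms already established.
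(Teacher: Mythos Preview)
Your proposal is correct and follows the same overall architecture as the paper: reduce to the standard space via Theorem~\ref{th isomorphism Phi Separable case}, then ``straighten'' the transported function into its decreasing rearrangement by a measure preserving self-map of $(I,\mathcal{B}_m,m)$; you also rightly point out that the conclusion should be read as $f=\xi_{f,\mu}\circ\phi$.

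The only noteworthy difference is in how the straightening on $I$ is carried out. The paper passes to the factor space of the partition $\zeta_f$ (level sets of $f$ of measure zero collapsed, positive-measure level sets left pointwise) and then invokes Rokhlin's classification of Lebesgue spaces to obtain an isomorphism $\tau$ with $\hat f=\xi_{f,\mu}\circ\tau$. You instead construct the map explicitly as the monotone transport $\psi=\eta\circ h$ on the continuous part of the distribution, patched together with measure preserving bijections from each positive-measure level set $\{h=c\}$ onto the corresponding flat segment of $\xi_{f,\mu}$ (this is the Ryff/Bennett--Sharpley route cited in the paper's remark after the proof). Both arguments isolate the same nontrivial point---spreading the positive-measure level sets across the flat segments using non-atomicity---and both rely on the Lebesgue-space structure of $(I,\mathcal{B}_m,m)$ at exactly that point. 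Your version is more concrete; the paper's is shorter because it outsources the construction to Rokhlin's theorem.
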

\begin{proof}
By Theorem \ref{th isomorphism Phi Separable case} we may assume without loss of generality that the considered
 measure space is standard.

     For every $f \in \mathbf{L}_0^\xi(\Omega,\mathcal{F}_\mu, \mu) $ denote by $\zeta_f$ the measurable  partition of $(I,\mathcal{B}_m, m)$ which consists of elements of the form
$$
  \zeta_f (x) =
      \left\{
       \begin{array}{ll}
           f^{-1}(f(x))  & , \;\mbox{ if}\; m\,(\{f = x\}) =0  \\
           x             & , \;\mbox{ if}\; m\,(\{f = x\}) > 0
       \end{array}
     \right.
$$
    Let $\pi_{\zeta_f} $ be the natural projection $(I,\mathcal{B}_m, m)$  on the factor-space $(I/\zeta_f, \mathcal{B}_m/\zeta_f,m/\zeta_f)$.
    The assumption  $\xi_{f,\mu}(\infty) =0$ provides that the factor-measure $m/\zeta_f$ is $\sigma$-finite, and
 $(I/\zeta_f, \mathcal{B}_m/\zeta_f,m/\zeta_f)$ is a Lebesgue space since the function $f$ and hence the partition $\zeta_f$ are measurable.

    The function $\hat{f}$ defined on $I/\zeta_f$ by the equality $f = \hat{f} \circ \pi_{\zeta_f}$ is equimeasurable to $f$ since $\pi_{\zeta_f}$ is measure
 preserving.
    By constructing, the partition $\zeta_{\hat{f}}$ consists of separate points of $I/\zeta_f$ and the partition $\zeta_{\xi_{f,\mu}}$ has just the
 same property on $I$.

    Thus there is a measure preserving isomorphism $\tau \colon (I/\zeta_f, \mathcal{B}_m/\zeta_f,m/\zeta_f) \to (I,\mathcal{B}_m,m)$ such that
 $\hat{f} = \xi_{f,\mu} \circ \tau$.
    The mapping $\phi = \tau \circ \pi_{\zeta_f} $ is desired.
\end{proof}

Theorem \ref{th xifm = f circ phi} seems to be known for long time.
    An explicit proof was given in \cite{BeSh}, sec. 2.7 and earlier in \cite{Ry}, in the case $\Omega=\mathbf{R}$.
    If $(\Omega,\mathcal{F}_\mu, \mu) $ is a Lebesgue space, the result follows from Rokhlin's classification of measurable functions \cite{Ro$_2$}. The above proof is new.

\subsection{ Minimality. Properties (A) and  (C)}
 \label{ss Minimality. Properties (A) and (C)}

  A symmetric space $\mathbf{E }= \mathbf{E}(\Omega, \mathcal{F}, \mu)$ is called {\it minimal} $\,$ if the set $\mathbf{F}_1$ of all simple (finite-valued) integrable
 functions on $\Omega$ is dense in $\mathbf{E}$, i.e. the closure $cl_\mathbf{E}(\mathbf{F}_1)$ coincides with $\mathbf{E}$.
     The set $\mathbf{F}_1$ consists of all simple functions $f\colon \Omega \to [0, \infty)$ such that $0 \leq \mu(\{|f|=a\}) < \infty $
 for all $a>0$.

     For every $0<p<\infty$, the set $\mathbf{F}_1$ is dense in $\mathbf{L}_p\cap \mathbf{L}_\infty $ in the norm $\|\cdot\|_{\mathbf{L}_p\cap \mathbf{L}_\infty}$.

\begin{itemize}
           \item[$\bullet$]
    Let $\mathbf{E}$ be a symmetric quasi-Banach  space with the  modulus of concavity $c_\mathbf{E} >1$  and let $0 < p< 1$ such that
  $C=2^{1/p -1} > c_\mathbf{E}$.
     Then $ \mathbf{L}_p\cap \mathbf{L}_\infty \subseteq \mathbf{E}$ and  $\mathbf{E}$ is minimal iff $cl_\mathbf{E} (\mathbf{L}_p\cap \mathbf{L}_\infty) =\mathbf{E}$.

     \item[$\bullet$]
     Let $\mathbf{E}$ is a symmetric Banach space. Then $\mathbf{L}_1\cap \mathbf{L}_\infty \subseteq \mathbf{E}$ and $\mathbf{E}$ is minimal iff $cl_\mathbf{E}(\mathbf{L}_1\cap \mathbf{L}_\infty ) =\mathbf{E}$.
\end{itemize}

In general case, the closure $\mathbf{E}^0 := cl_\mathbf{E} (\mathbf{F}_1) \subseteq \mathbf{E}$ is a symmetric space, which  is referred to as
 the {\it minimal part} of $ \mathbf{E}$.
     So that $\mathbf{E}$ is a minimal symmetric spaces iff $\mathbf{E}^0 = \mathbf{E}$.

     It is clear that two equimeasurable symmetric spaces are or are not minimal, simultaneously.

     A symmetric space $\mathbf{E}$ is said to have property (A)  (order continuous norm) if
\begin{itemize}
         \item[(A)]
    $\;$ If $ 0  \leq f_n \in \mathbf{E}$ and $f_n \downarrow 0$ \ then \ $\|f_n\|_{\mathbf{E}} \downarrow 0$.
     \end{itemize}

    An element $f \in \mathbf{E}$ has an {\it absolutely continuous norm}, if for every $\varepsilon >0$ there exists $\delta > 0$ such that
 $\|\mathbf{1}_A f\| < \varepsilon$ for all $A \in \mathcal{F}_\mu$ with $\mu (A) < \delta$.

\begin{theorem}
 \label{th property (A)}
      Let $\mathbf{E}=\mathbf{E}(\Omega,\mathcal{F}_\mu,\mu)$ be a symmetric  space on $(\Omega,\mathcal{F}_\mu,\mu)$.
     Then the following conditions are equivalent:
 \begin{itemize}
              \item[1.] \  $\mathbf{E}$ has property (A).
  \item[2.] \  Every $f \in \mathbf{E}$ has an absolutely continuous norm.
  \item[3.] \ $\mathbf{E}$ is minimal and $ \varphi_{\mathbf{E}} (0+) = 0$.
 \end{itemize}
\end{theorem}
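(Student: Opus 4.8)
The plan is to prove the three conditions equivalent by establishing the cycle $1 \Rightarrow 2 \Rightarrow 3 \Rightarrow 1$. Since property (A), absolute continuity of the norm, minimality, and the value $\varphi_\mathbf{E}(0+)$ are all invariants of a class of equimeasurable symmetric spaces, I would first invoke Theorem \ref{th main theorem 2} (together with Theorem \ref{th isomorphism Phi Separable case}) to reduce to the standard space, so that $\mathbf{E} = \mathbf{E}(I,\mathcal{B}_m,m)$ with $I = [0,a]$ or $I = [0,\infty)$ and every norm may be computed on decreasing rearrangements. Throughout I would use the ideal (lattice) property freely, writing $\|\mathbf{1}_A f\| \le \|\mathbf{1}_A g\|$ whenever $|f| \le |g|$, together with the quasi-triangle inequality with a fixed constant $C \ge 1$ (with $C = 1$ in the Banach case), and the elementary bound $\|h\| \le \|h\|_\infty\,\varphi_\mathbf{E}(\mu(\operatorname{supp} h))$ valid for every bounded $h$ with support of finite measure.

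For $1 \Rightarrow 2$, given $f \in \mathbf{E}$ and sets $A_n \downarrow \emptyset$ mod $\mu$, I would put $B_n = \bigcup_{k \ge n} A_k$, so that $\mathbf{1}_{B_n}|f| \downarrow 0$ $\mu$-a.e.; property (A) then gives $\|\mathbf{1}_{B_n} f\| \downarrow 0$, and monotonicity yields $\|\mathbf{1}_{A_n} f\| \le \|\mathbf{1}_{B_n} f\| \to 0$, with the choice $\mu(A_n) < 2^{-n}$ recovering the small-measure formulation. For $2 \Rightarrow 3$, absolute continuity applied to $\mathbf{1}_{[0,1]} \in \mathbf{E}$ along the sets $[0,t]$ gives $\varphi_\mathbf{E}(t) \to 0$, i.e. $\varphi_\mathbf{E}(0+)=0$. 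For minimality I would fix $f \ge 0$ in $\mathbf{E}$, an exhaustion $\Omega_N \uparrow \Omega$ with $\mu(\Omega_N)<\infty$, and the truncations $g_N = \min(f,N)\,\mathbf{1}_{\Omega_N}$; then $0 \le f - g_N \le f\,\mathbf{1}_{E_N}$ with $E_N = (\Omega\setminus\Omega_N)\cup\{f>N\} \downarrow \emptyset$, so absolute continuity forces $\|f-g_N\| \to 0$. As each $g_N$ is bounded with finite-measure support, it is uniformly approximable by elements of $\mathbf{F}_1$, which by the displayed bound is also approximation in $\mathbf{E}$; hence $f \in cl_\mathbf{E}(\mathbf{F}_1)$ and $\mathbf{E}$ is minimal.

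The crux is $3 \Rightarrow 1$. Let $0 \le f_n \downarrow 0$ in $\mathbf{E}$ and fix $\varepsilon>0$. By minimality choose $s \in \mathbf{F}_1$ with $0 \le s$ and $\|f_1 - s\| < \varepsilon$; since $f_n \le \min(f_n,s) + (f_1-s)^+$, it suffices to control $h_n := \min(f_n,s)$. Now $h_n \downarrow 0$ is bounded by $\|s\|_\infty$ and supported in the finite-measure set $S = \operatorname{supp} s$, so Egorov's theorem supplies $A \subseteq S$ of arbitrarily small measure with $h_n \to 0$ uniformly off $A$. Splitting $h_n = h_n\mathbf{1}_A + h_n\mathbf{1}_{S\setminus A}$ and estimating the two pieces by $\|s\|_\infty\,\varphi_\mathbf{E}(\mu(A))$ and $\|h_n\mathbf{1}_{S\setminus A}\|_\infty\,\varphi_\mathbf{E}(\mu(S))$, the first is made small using $\varphi_\mathbf{E}(0+)=0$ and the second tends to $0$ by uniform convergence; thus $\limsup_n\|h_n\|$ is arbitrarily small, whence $\limsup_n\|f_n\| \le C\varepsilon$. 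Letting $\varepsilon \to 0$ gives $\|f_n\| \to 0$, which is property (A).

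I expect the main obstacle to be the tail (infinite-measure) behaviour, which the small-measure form of absolute continuity does not by itself control: on $\mathbf{L}_1 + \mathbf{L}_\infty$ over $[0,\infty)$ every element has small-set absolutely continuous norm, yet $\mathbf{1}_{[n,\infty)} \downarrow 0$ with constant norm shows property (A) fails. This is exactly why condition 3 couples $\varphi_\mathbf{E}(0+)=0$ (governing small sets) with minimality (governing the tail); the two delicate points, namely the reduction $E_N \downarrow \emptyset$ in $2 \Rightarrow 3$ and the passage to the finite-support function $s$ in $3 \Rightarrow 1$, are both resolved by minimality, so the argument should be read as using the absolute-continuity hypothesis in its form \emph{$\|\mathbf{1}_{A_n} f\| \to 0$ whenever $A_n \downarrow \emptyset$}, of which the stated small-measure condition is the finite-measure manifestation.
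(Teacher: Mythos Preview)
The paper does not supply its own proof of this theorem: immediately after the statement it simply records that the result is well known for symmetric Banach spaces, cites \cite{KanAk} and \cite{LTz}, and remarks that the argument goes through unchanged when the norm is replaced by an equivalent $p$-norm. Your cycle $1\Rightarrow 2\Rightarrow 3\Rightarrow 1$ is precisely the standard argument found in those references, and it is correct; in particular your handling of $3\Rightarrow 1$ via an $\mathbf{F}_1$-approximant $s$, Egorov on the finite-measure support of $s$, and the estimate $\|h\|_\mathbf{E}\le\|h\|_\infty\,\varphi_\mathbf{E}(\mu(\operatorname{supp}h))$ is exactly how the classical proof runs.

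Your closing paragraph is the most valuable part and should be read as a correction to the paper rather than a weakness of your proof. With the paper's \emph{small-measure} definition of absolutely continuous norm, the equivalence $1\Leftrightarrow 2$ is simply false when $\mu(\Omega)=\infty$: your example $\mathbf{E}=\mathbf{L}_1+\mathbf{L}_\infty$ on $[0,\infty)$ satisfies condition~2 in that sense (since $\|\mathbf{1}_A f\|\le\int_0^{\mu(A)}\xi_{f,\mu}\,dm\to 0$) but fails both~1 and~3. Accordingly, your step $2\Rightarrow 3$ does not go through under the paper's literal definition, because the sets $E_N=(\Omega\setminus\Omega_N)\cup\{f>N\}$ have infinite measure; it requires condition~2 in the form $\|\mathbf{1}_{A_n}f\|\to 0$ whenever $A_n\downarrow\emptyset$, which is the definition used in \cite{BeSh} and implicitly in the references the paper cites. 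You identified this correctly; the fix is to amend the definition, not the proof.
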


The result is well-known in the case when $\mathbf{E}$ is a symmetric Banach space.
 See, \cite{KanAk}, X.3, Theorem 3, or  \cite{LTz}, Proposition 1.a.8, Theorem 1.b.16. The statement remains valid if we replace norms by $p$-norms.

     A symmetric space $\mathbf{E}$ is said to have property (C) ({\it order semi-continuous norm}) if
\begin{itemize}
  \item[(C)]
 $\;$  $ 0  \leq f_n \uparrow f \in \mathbf{E}$ \ implies \  $\sup_n \|f_n\|_\mathbf{E} = \|f\|_\mathbf{E}$.
 \end{itemize}

 Note that a symmetric space $\mathbf{E}(\Omega,\mathcal{F}_\mu,\mu)$ has property (C) iff its standard symmetric space $\mathbf{E}(I,\mathcal{B}_m,m)$ has the property.

   It is obvious, that Property (A) implies property (C) and the minimality.

   Minimal symmetric space need not to have property (A).  Nevertheless,

\begin{theorem}
 \label{th minimality implies (C)}
    Every minimal symmetric space $\mathbf{E}$ has property (C).
\end{theorem}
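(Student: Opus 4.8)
The plan is to pass to the standard model, use minimality to cut the problem down to bounded functions of finite support, reduce that to an order‑semicontinuity statement for simple functions, and then isolate the one genuinely delicate point, which is governed by whether the fundamental function vanishes at the origin.

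First I would reduce to the standard space. Since minimality and property (C) are both invariant within a class of equimeasurable symmetric spaces (the former noted after the definition of the minimal part, the latter in the remark preceding the statement), it suffices to treat a minimal standard space $\mathbf{E}=\mathbf{E}(I,\mathcal{B}_m,m)$. Given $0\le f_n\uparrow f\in\mathbf{E}$, condition~1 (ideal monotonicity of the norm) gives $\|f_n\|_\mathbf{E}\le\|f\|_\mathbf{E}$, so $M:=\sup_n\|f_n\|_\mathbf{E}\le\|f\|_\mathbf{E}$ is automatic and only the reverse inequality $\|f\|_\mathbf{E}\le M$ needs proof. Because the norm is symmetric and $f_n\uparrow f$ forces $\eta_{f_n,m}\uparrow\eta_{f,m}$, hence $\xi_{f_n,m}\uparrow\xi_{f,m}$, I may replace all functions by their decreasing rearrangements and assume them decreasing on $I$. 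Now fix $\varepsilon>0$ and, using $\mathbf{E}=cl_\mathbf{E}(\mathbf{F}_1)$, pick a nonnegative simple integrable $s$ with $\|f-s\|_\mathbf{E}<\varepsilon$. Writing $f=f\wedge s+(f-s)^+$ and using the $p$-norm provided by the Aoki–Rolewicz theorem, $\|f\|_\mathbf{E}^p\le\|f\wedge s\|_\mathbf{E}^p+\|(f-s)^+\|_\mathbf{E}^p\le\|f\wedge s\|_\mathbf{E}^p+\varepsilon^p$, so it is enough to bound $\|h\|_\mathbf{E}$ with $h:=f\wedge s$. This $h$ is bounded, say $h\le K$, has finite-measure support, $h_n:=f_n\wedge s\uparrow h$, and $\|h_n\|_\mathbf{E}\le\|f_n\|_\mathbf{E}\le M$.

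The horizontal slices $\sigma_N\le h$ converge to $h$ in $\mathbf{E}$, since their defect is at most $(K/N)\,\varphi_\mathbf{E}(m(\operatorname{supp}h))$, small by the height factor $K/N$ and not by any small‑measure factor; the analogous slices $\sigma_N^{(n)}\le h_n$ of $h_n$ satisfy $\sigma_N^{(n)}\uparrow\sigma_N$ as $n\to\infty$ because $\{h_n>jK/N\}\uparrow\{h>jK/N\}$. Hence the whole theorem reduces to order‑semicontinuity on simple functions: $\|\sigma_N\|_\mathbf{E}\le\sup_n\|\sigma_N^{(n)}\|_\mathbf{E}$. For a single layer this is immediate, since $\sigma_N^{(n)}=a\,1_{B^{(n)}}$ with $B^{(n)}\uparrow B$ gives $\|\sigma_N^{(n)}\|_\mathbf{E}=a\,\varphi_\mathbf{E}(m(B^{(n)}))\to a\,\varphi_\mathbf{E}(m(B))$, using that the fundamental function $\varphi_\mathbf{E}$, being quasiconcave, is continuous on $(0,\infty)$.

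The hard part is the multi‑layer case, and the obstruction is precisely whether $\varphi_\mathbf{E}(0+)=0$. If $\varphi_\mathbf{E}(0+)=0$, the defect $\|\sigma_N-\sigma_N^{(n)}\|_\mathbf{E}$ is dominated by $\varphi_\mathbf{E}$ of the vanishing measure of a symmetric difference and tends to $0$; equivalently, by Theorem~\ref{th property (A)}, minimality together with $\varphi_\mathbf{E}(0+)=0$ is property (A), which already implies (C). When $\varphi_\mathbf{E}(0+)=c>0$ this estimate fails, because deleting a small‑measure set costs at least $c$, so one cannot argue by norm differences and must use genuine semicontinuity; note that here $\|g\|_\mathbf{E}\ge c\,\|g\|_{\mathbf{L}_\infty}$, so the top of the decreasing profile is controlled by an essential supremum, which is itself order‑continuous from below. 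I would finish by recovering the norm from the associate (Köthe) space: the second associate $\mathbf{E}''$ has the Fatou property, hence property (C), while on the minimal part $\|g\|_\mathbf{E}=\|g\|_{\mathbf{E}''}$, so (C) transfers to $\mathbf{E}=\mathbf{E}^0$. The main obstacle is exactly this last step, namely the isometric identification of the minimal part with its image in the Fatou space $\mathbf{E}''$ (equivalently, order‑semicontinuity of the norm on multi‑level simple functions when $\varphi_\mathbf{E}(0+)>0$), together with carrying the duality argument over to the quasi‑Banach setting through the $p$-norm, where the associate space might otherwise degenerate.
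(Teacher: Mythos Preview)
The paper does not give a self-contained argument here: it cites \cite{MuRu} for the standard Banach case, asserts that replacing the norm by an equivalent $p$-norm carries the proof over to quasi-Banach spaces, and then invokes the invariance of minimality and of property~(C) under equimeasurability to cover general measure spaces. Your reduction to the standard model and your use of the Aoki--Rolewicz $p$-norm are thus exactly in line with the paper's reasoning, and your handling of the case $\varphi_\mathbf{E}(0+)=0$ is correct (minimality together with $\varphi_\mathbf{E}(0+)=0$ is property~(A) by Theorem~\ref{th property (A)}, and (A) implies (C)).

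The genuine gap is the remaining case $\varphi_\mathbf{E}(0+)>0$. Your proposed closure via the second associate is circular: the identity $\|g\|_\mathbf{E}=\|g\|_{\mathbf{E}^{11}}$ on $\mathbf{E}^0$ is, by the Nakano--Amemiya--Mori theorem (Theorem~\ref{th property (C)}), \emph{equivalent} to property~(C) on $\mathbf{E}^0=\mathbf{E}$, which is precisely the statement to be proved. You flag this yourself as ``the main obstacle'' but do not resolve it. Moreover, in the non-normable quasi-Banach regime the associate space $\mathbf{E}^1$ can be $\{0\}$ (see the Remark following Theorem~\ref{th E^1 = E^*}), so the duality route is unavailable there in principle, whereas the theorem is asserted for all symmetric quasi-Banach spaces. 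What is still missing is a direct, duality-free argument---for instance, a proof that for simple functions $s_n\uparrow s$ taking values in a fixed finite set one has $\|s_n\|_\mathbf{E}\to\|s\|_\mathbf{E}$ even when $\varphi_\mathbf{E}(0+)>0$. The paper defers exactly this to \cite{MuRu} rather than supplying it, so your plan and the paper agree on the reductions but neither your sketch nor the present paper closes the core step independently.
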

The Theorem was proved in \cite{MuRu}, sec.2, Theorem 2.2 in the case,  when $\mathbf{E}$  is a  standard symmetric  Banach spaces.

The proof is literally carried over to the case of quasi-Banach symmetric spaces $\mathbf{E}$ if we      replace the norm $\|\cdot\|_\mathbf{E}$ by an equivalent $p$-norm $\|\cdot\|$ with $0< p \leq  1$.

     Since both minimality and (C) properties are invariant for equimeasurable symmetric spaces, Theorem \ref{th minimality implies (C)}
 holds true for symmetric spaces on general (not necessarily standard) measure spaces.

    Property (A) as well as minimality are invariant of equimeasurable symmetric spaces.

    The minimality condition of $\mathbf{E}(\Omega,\mathcal{F}_\mu,\mu)$ can be formulated in terms of $\mathbf{E}(I,\mathcal{B}_m,m)$ as follows.

    Let $\min(\xi_{f,\mu}, n)$ and $\xi_{f,\mu} \cdot \mathbf{1}_{[0,n]}$ be upper and right $n$-cutoff functions of $\xi_{f,\mu}$,
 where $f\in \mathbf{E}(\Omega,\mathcal{F}_\mu,\mu)$ and $\xi_{f,\mu} \in \mathbf{E}(I,\mathcal{B}_m,m)$.
\begin{itemize}
 \item[$\bullet$]
    A symmetric space $\mathbf{E}(\Omega,\mathcal{F}_\mu,\mu)$ is minimal iff
$$
  \|\xi_{f,\mu} - \min(\xi_{f,\mu}, n)\|_{\mathbf{E}(I,\mathcal{B}_m,m)} \to 0
  \;\mbox{and}\;  \|\xi_{f,\mu} - \xi_{f,\mu}\cdot \mathbf{1}_{[0,n]}\|_{\mathbf{E}(I,\mathcal{B}_m,m)} \to 0 \;\mbox{as}\; n\to \infty \;.
$$
\end{itemize}

    This simple remark combined with Theorems \ref{th main theorem 1} and \ref{th main theorem 2} yields the following

\begin{theorem}
 \label{th separability and (A)}
     Let $\mathbf{E}(\Omega,\mathcal{F}_\mu,\mu)$ be a symmetric  space on $(\Omega,\mathcal{F}_\mu,\mu)$ and $\mathbf{E}(I, \mathcal{B}_m, m)$ be its standard symmetric Banach space on   the corresponding standard measure space $(I, \mathcal{B}_m, m)$.
    Then
\begin{enumerate}

  \item[1.]
   The following conditions are equivalent:
 \begin{itemize}
  \item[$\bullet$]
   $\mathbf{E}(\Omega,\mathcal{F}_\mu,\mu)$ has property (A).
  \item[$\bullet$]
   $\mathbf{E}(I,\mathcal{B}_m,m)$ has property (A).
  \item[$\bullet$]
   $\mathbf{E}(I,\mathcal{B}_m,m)$ is separable.
\end{itemize}
  \item[2.]
   The following conditions are equivalent:
  \begin{itemize}
    \item[$\bullet$]
     $\; \mathbf{E}(\Omega,\mathcal{F}_\mu,\mu)$ has property (A) and the measure space $(\Omega,\mathcal{F}_\mu,\mu)$ is separable.
    \item[$\bullet$]
     $\; \mathbf{E}(I,\mathcal{B}_m,m)$ has property (A) and the measure space $(\Omega,\mathcal{F}_\mu,\mu)$ is separable.
    \item[$\bullet$]
     $\; \mathbf{E}(I,\mathcal{B}_m,m)$ is separable and the measure space $(\Omega,\mathcal{F}_\mu,\mu)$ is separable.
   \end{itemize}
 \end{enumerate}
 \end{theorem}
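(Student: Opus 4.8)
My plan is to reduce the whole statement to the single equivalence, \emph{for the standard space}, that $\mathbf{E}(I,\mathcal{B}_m,m)$ is separable if and only if it has property (A), and then transport this to $(\Omega,\mathcal{F}_\mu,\mu)$ through the equimeasurability already established. For Part~1 the equivalence of ``$\mathbf{E}(\Omega,\mathcal{F}_\mu,\mu)$ has (A)'' and ``$\mathbf{E}(I,\mathcal{B}_m,m)$ has (A)'' is immediate: by Theorems \ref{th main theorem 1} and \ref{th main theorem 2} the two spaces are strictly equimeasurable, and property (A) is an invariant of equimeasurable symmetric spaces. It then remains to treat the standard space alone. For ``(A) $\Rightarrow$ separable'' I would use Theorem \ref{th property (A)}: (A) forces $\mathbf{E}(I)$ to be minimal, so the integrable simple functions are dense, and forces $\varphi_\mathbf{E}(0+)=0$. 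The countable set of simple functions with rational values that are constant on finitely many intervals with rational endpoints is then dense, since an arbitrary $s=\sum_k a_k\mathbf{1}_{A_k}$ is approximated by replacing each $a_k$ by a rational $q_k$ and each $A_k$ by a finite union $B_k$ of rational intervals, the error being bounded by $\sum_k|a_k|\,\varphi_\mathbf{E}(m(A_k\triangle B_k))+\sum_k|a_k-q_k|\,\varphi_\mathbf{E}(m(B_k))$, which is small precisely because $\varphi_\mathbf{E}(0+)=0$.

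For the converse I would argue the contrapositive. If $\mathbf{E}(I)$ lacks (A), then by Theorem \ref{th property (A)} it is either non-minimal or has $\varphi_\mathbf{E}(0+)>0$, and in each case I exhibit an uncountable, pairwise norm-separated family. If $\varphi_\mathbf{E}(0+)=c>0$, the family $\{\mathbf{1}_{[0,\alpha]}\}_{\alpha\in(0,1)}$ works, since $\|\mathbf{1}_{[0,\alpha]}-\mathbf{1}_{[0,\beta]}\|_\mathbf{E}=\varphi_\mathbf{E}(|\alpha-\beta|)\ge\varphi_\mathbf{E}(0+)=c$. If instead $\mathbf{E}(I)$ is non-minimal, I invoke the Banach-lattice characterization of order continuity (\cite{LTz}, Proposition~1.a.8; valid for $p$-norms as well): failure of order continuity of the norm yields a positive, order-bounded, pairwise disjoint sequence $(u_k)$, say $0\le u_k\le w\in\mathbf{E}$, with $\inf_k\|u_k\|_\mathbf{E}=c>0$. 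For each $S\subseteq\mathbf{N}$ put $f_S=\sum_{k\in S}u_k\le w$, so $f_S\in\mathbf{E}$; for $S\ne T$ and $k\in S\triangle T$ one has $|f_S-f_T|\ge u_k$, hence $\|f_S-f_T\|_\mathbf{E}\ge c$. The uncountable family $\{f_S\}_{S\subseteq\mathbf{N}}$ is $c$-separated, so $\mathbf{E}(I)$ is not separable. This completes Part~1.

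Each condition in Part~2 is of the form ``(one of the three equivalent conditions of Part~1) \emph{and} $(\Omega,\mathcal{F}_\mu,\mu)$ is separable'', so their mutual equivalence is immediate from Part~1. The substantive content, and the reason for the statement, is that each is in fact equivalent to separability of $\mathbf{E}(\Omega,\mathcal{F}_\mu,\mu)$ itself (recall that, unlike (A), separability is not an equimeasurable invariant). I would establish the master claim: $\mathbf{E}(\Omega,\mathcal{F}_\mu,\mu)$ is separable iff it has (A) \emph{and} $(\Omega,\mathcal{F}_\mu,\mu)$ is separable. Right-to-left repeats the ``(A) $\Rightarrow$ separable'' argument above, now using a countable subalgebra generating $\mathcal{F}_\mu$ (which exists since $\Omega$ is separable) in place of the rational intervals. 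Left-to-right splits in two: first, if $\mathbf{E}(\Omega)$ is separable it has (A), because both separated-family constructions above go through verbatim on any non-atomic space (using a chain $\{A_\alpha\}$ with $\mu(A_\alpha)=\alpha$ in the $\varphi_\mathbf{E}(0+)>0$ case, and the identical disjoint-sequence argument in the non-minimal case); second, $(\Omega,\mathcal{F}_\mu,\mu)$ must be separable, for otherwise, by Theorem \ref{th separability of gW, nabla and L_0} and the equivalence of separability of $(\nabla,\delta_\nabla)$ and of $L_1(\mu)$, the measure algebra of finite-measure sets metrized by $\mu(A\triangle B)$ is non-separable, yielding an uncountable family $\{A_\alpha\}$ with $\mu(A_\alpha\triangle A_\beta)\ge\delta>0$, whence $\|\mathbf{1}_{A_\alpha}-\mathbf{1}_{A_\beta}\|_\mathbf{E}=\varphi_\mathbf{E}(\mu(A_\alpha\triangle A_\beta))\ge\varphi_\mathbf{E}(\delta)>0$.

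The main obstacle is the direction ``separable $\Rightarrow$ (A)'', i.e.\ the manufacture of the uncountable norm-separated families. The delicate sub-case is non-minimality, where no separated family is visible directly and one must first extract a disjoint, order-bounded, norm-bounded-below sequence; this is exactly the order-continuity dichotomy for Banach (and $p$-normable quasi-Banach) lattices, which I take from the literature. A secondary point needing care is the last passage: one must verify that non-separability of $(\nabla,\delta_\nabla)$ really produces finite-measure sets whose pairwise symmetric differences are bounded below in $\mu$-measure (not merely in the weighted metric $\delta_\nabla$), which is why I route it through the equivalence of separability of $(\nabla,\delta_\nabla)$, of $L_1(\mu)$, and of $\mathcal{F}_\mu$ mod $\mu$.
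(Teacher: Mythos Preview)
Your proposal is correct and follows the same reduction the paper indicates: transport property~(A) between $\mathbf{E}(\Omega,\mathcal{F}_\mu,\mu)$ and $\mathbf{E}(I,\mathcal{B}_m,m)$ via the strict equimeasurability of Theorems~\ref{th main theorem 1} and~\ref{th main theorem 2} (together with Theorem~\ref{th property (A)}), and then treat the standard space alone. The paper, however, gives no proof at all of Theorem~\ref{th separability and (A)}; it simply records it as a consequence of the preceding remark and Theorems~\ref{th main theorem 1}, \ref{th main theorem 2}, implicitly taking the equivalence ``$(A)\Longleftrightarrow$ separable'' for a standard symmetric space as known from the literature (\cite{KanAk}, \cite{LTz}). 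What you add is an explicit argument for that equivalence: the density of rational-step simple functions using $\varphi_\mathbf{E}(0+)=0$ in one direction, and the construction of uncountable norm-separated families (via the $\varphi_\mathbf{E}(0+)>0$ case and via the disjoint order-bounded sequence extracted from failure of order continuity) in the other. This is more than the paper supplies, and it is sound.

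Two minor remarks. First, Part~2 as stated is, as you correctly observe, an immediate corollary of Part~1: each bullet is ``(a Part~1 condition) and $(\Omega,\mathcal{F}_\mu,\mu)$ separable''. Your ``master claim'' that these are further equivalent to separability of $\mathbf{E}(\Omega,\mathcal{F}_\mu,\mu)$ itself is not part of the stated theorem (though it is plainly the intended point, cf.\ the paper's remark immediately after the theorem that separability is not an equimeasurable invariant); your argument for it is fine, and your caution about passing from the weighted metric $\delta_\nabla$ to genuine lower bounds on $\mu(A\triangle B)$ is well placed. Second, in the non-minimal case you invoke the order-continuity dichotomy to obtain the disjoint sequence; note that this is really the contrapositive of ``(A) $\Rightarrow$ every order-bounded disjoint sequence is norm-null'', so you are using failure of (A) directly, not non-minimality per se---which is harmless, since that is exactly your starting hypothesis.
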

We observe that separability (in contrast to minimality and property (A)) is not invariant of equimeasurable symmetric spaces

\subsection{Associate symmetric spaces}
 \label{ss Associate symmetric spaces}$\;$

  Let $\mathbf{E}=\mathbf{E}(\Omega,\mathcal{F}_\mu,\mu)$ be a symmetric space on $(\Omega,\mathcal{F}_\mu,\mu)$.

  The {\it associate (or K\"{o}the dual)} space $(\mathbf{E}(\Omega,\mathcal{F}_\mu,\mu))^1$ of a symmetric space $\mathbf{E}(\Omega,\mathcal{F}_\mu,\mu)$ is defined as
$$
      (\mathbf{E}(\Omega,\mathcal{F}_\mu,\mu))^1 :=  \left\{ g \in \mathbf{L}_0(\Omega,\mathcal{F}_\mu,\mu) \colon \|g\|_{(\mathbf{E}(\Omega,\mathcal{F}_\mu,\mu))^1} < \infty  \right\} \;,
$$
 where
$$
   \|g\|_{(\mathbf{E}(\Omega,\mathcal{F}_\mu,\mu))^1} := \sup \left \{ \int_\Omega f g \,d\mu, \  \|f\|_{\mathbf{E}(\Omega,\mathcal{F}_\mu,\mu)} \leq 1 \right\}\;.
$$

\begin{proposition}
      Let $\mathbf{E}(\Omega,\mathcal{F}_\mu,\mu)$ be a symmetric Banach space on $(\Omega,\mathcal{F}_\mu,\mu)$ \ and \ $\mathbf{E}(I, \mathcal{B}_m, m)$ be its standard symmetric Banach space on
  the corresponding standard measure space $(I, \mathcal{B}_m, m)$.
 \begin{enumerate}
  \item[1.]
      The spaces $(\mathbf{E}(\Omega,\mathcal{F}_\mu,\mu)^1$ and $(\mathbf{E}(I, \mathcal{B}_m, m))^1$ are symmetric Banach spaces on $(\Omega,\mathcal{F}_\mu,\mu)$ and
   $(I, \mathcal{B}_m, m)$, respectively.
  \item[2.]
       The spaces $(\mathbf{E}(\Omega,\mathcal{F}_\mu,\mu)^1$ and $(\mathbf{E}(I, \mathcal{B}_m, m))^1$ are strictly equimeasurable and
 $$
      \|g\|_{(\mathbf{E}(\Omega,\mathcal{F}_\mu,\mu))^1} =  \|\xi_{g,\mu}\|_{(\mathbf{E}(I,\mathcal{B}_m,m))^1} =
      \sup \left \{ \int_I \xi_{f,\mu} \,\xi_{g,\mu} \,d\mu, \ \|f\|_{\mathbf{E}(I, \mathcal{B}_m, m)} \leq 1 \right\}\;.
 $$
  \end{enumerate}
 \end{proposition}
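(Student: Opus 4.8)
The plan is to establish the representation formula of part~2 first, because it exhibits $\|g\|_{(\mathbf{E}(\Omega,\mathcal{F}_\mu,\mu))^1}$ as a quantity depending on $g$ only through $\xi_{g,\mu}$, which is exactly the rearrangement invariance still needed to finish part~1; the remaining structural properties of the associate space are classical. For part~1 I would first recall that for \emph{any} symmetric space $\mathbf{E}=\mathbf{E}(\Omega,\mathcal{F}_\mu,\mu)$ the functional $\|\cdot\|_{\mathbf{E}^1}$ is a supremum of the absolutely homogeneous maps $g\mapsto\int_\Omega fg\,d\mu$, hence a seminorm. Since $f$ and $|f|$ are equimeasurable, the unit ball of $\mathbf{E}$ is stable under taking absolute values and under changing signs on a fixed level set; replacing $f$ by $|f|$ times the sign of $g$ shows that $\|g\|_{\mathbf{E}^1}=\sup\{\int_\Omega f|g|\,d\mu\colon f\ge 0,\ \|f\|_\mathbf{E}\le 1\}$ depends on $g$ only through $|g|$ and is monotone in $|g|$, giving the ideal property (Condition~1). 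Non-degeneracy and the embeddings $\mathbf{L}_1\cap\mathbf{L}_\infty\subseteq\mathbf{E}^1\subseteq\mathbf{L}_1+\mathbf{L}_\infty$ follow from the corresponding embeddings for $\mathbf{E}$, and completeness is the standard Fatou property of K\"othe duals: the associate norm is lower semicontinuous under monotone a.e.\ convergence, so a $\mathbf{E}^1$-Cauchy sequence, which converges in measure to some $g$, converges to $g$ in $\mathbf{E}^1$-norm as well. These facts are classical (see \cite{BeSh}, \cite{KrPiSe}, \cite{LTz}).

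The engine of the proof is the Hardy--Littlewood inequality together with its sharpness. For nonnegative $f,g$ one has
$$
 \int_\Omega fg\,d\mu \;\le\; \int_0^\infty \xi_{f,\mu}\,\xi_{g,\mu}\,dm,
$$
and on a non-atomic $\sigma$-finite space the right-hand side equals $\sup\{\int_\Omega \tilde f g\,d\mu\colon \eta_{\tilde f,\mu}=\eta_{f,\mu}\}$, i.e.\ the bound is attained (or at least approached) along the equimeasurable class of $f$. Constructively I would realize the extremal arrangement from Theorem~\ref{th xifm = f circ phi}: applied to $g$ it furnishes a measure preserving $\phi$ with $g=\xi_{g,\mu}\circ\phi$, so that $\tilde f:=\xi_{f,\mu}\circ\phi$ is equimeasurable with $f$ and, by change of variables, $\int_\Omega\tilde f g\,d\mu=\int_0^\infty\xi_{f,\mu}\,\xi_{g,\mu}\,dm$. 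Because $\mathbf{E}(\Omega,\mathcal{F}_\mu,\mu)$ is symmetric, every such $\tilde f$ lies in the unit ball whenever $f$ does, so the inner supremum over each equimeasurable class may be absorbed into the outer one, yielding
$$
 \|g\|_{(\mathbf{E}(\Omega,\mathcal{F}_\mu,\mu))^1}
   =\sup\Bigl\{\int_0^\infty \xi_{f,\mu}\,\xi_{g,\mu}\,dm\colon \|f\|_{\mathbf{E}(\Omega,\mathcal{F}_\mu,\mu)}\le 1\Bigr\}.
$$
This representation depends on $g$ only through $\xi_{g,\mu}$ and thus verifies the symmetry (Condition~2) left open in part~1.

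To finish part~2 I would invoke the main theorems. By Theorems~\ref{th main theorem 1} and~\ref{th main theorem 2} the spaces $\mathbf{E}(\Omega,\mathcal{F}_\mu,\mu)$ and $\mathbf{E}(I,\mathcal{B}_m,m)$ are strictly equimeasurable, so $\Xi(\mathbf{E}(\Omega,\mathcal{F}_\mu,\mu))=\Xi(\mathbf{E}(I,\mathcal{B}_m,m))$ with equal induced norms. Hence the rearrangements $\xi_{f,\mu}$ that appear in the displayed supremum range over precisely the same subset of $\mathbf{L}_0(I,\mathcal{B}_m,m)$ whether $f$ is drawn from the unit ball of $\mathbf{E}(\Omega,\mathcal{F}_\mu,\mu)$ or from that of $\mathbf{E}(I,\mathcal{B}_m,m)$; the supremum is therefore one and the same number. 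On the standard side the very same Hardy--Littlewood representation identifies it with $\|\xi_{g,\mu}\|_{(\mathbf{E}(I,\mathcal{B}_m,m))^1}$ (here $\xi_{g,\mu}$ is already its own decreasing rearrangement), which gives the three-way equality of part~2. Since the associate norm of $g$ thereby equals that of its rearrangement $\xi_{g,\mu}$, the associate spaces $(\mathbf{E}(\Omega,\mathcal{F}_\mu,\mu))^1$ and $(\mathbf{E}(I,\mathcal{B}_m,m))^1$ are strictly equimeasurable.

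The main obstacle is the sharpness step for the Hardy--Littlewood inequality on a general non-atomic $\sigma$-finite space, that is, the interchange of the defining supremum with passage to the comonotone rearrangement, which is what genuinely uses non-atomicity. A subsidiary difficulty is the integrability bookkeeping when $\mu(\Omega)=\infty$: a function $g\in\mathbf{E}^1$ may have $\xi_{g,\mu}(\infty)>0$ (as for $g\equiv 1$ when $\mathbf{E}=\mathbf{L}_1$), so Theorem~\ref{th xifm = f circ phi} does not apply verbatim, and the comonotone construction must be set up on finite-measure truncations $g\cdot\mathbf{1}_A$ and passed to the limit by monotone convergence, controlling the tail so that the Hardy--Littlewood integral and the supremum remain matched.
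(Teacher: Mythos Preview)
Your argument is correct, but it proceeds along a genuinely different route from the paper's own proof. You build the representation formula directly on the general space via the Hardy--Littlewood inequality and realize its sharpness through Theorem~\ref{th xifm = f circ phi}, then match the two sides using the strict equimeasurability of $\mathbf{E}(\Omega,\mathcal{F}_\mu,\mu)$ and $\mathbf{E}(I,\mathcal{B}_m,m)$ furnished by Theorems~\ref{th main theorem 1} and~\ref{th main theorem 2}. The paper instead fixes $g$, chooses a countable subalgebra $\mathcal{A}\in\mathfrak{A}$ with $g$ being $\mathcal{F}_\mathcal{A}$-measurable, and uses the conditional expectation $\mathbb{E}^{\mathcal{F}_\mathcal{A}}_\mu$ to reduce the defining supremum from the full unit ball of $\mathbf{E}(\Omega,\mathcal{F}_\mu,\mu)$ to that of the separable restriction $\mathbf{E}(\Omega,\mathcal{F}_\mathcal{A},\mu_\mathcal{A})$; the isomorphism $\Phi_\mathcal{A}$ of Theorem~\ref{th isomorphism Phi Separable case} then transports everything to $(I,\mathcal{B}_m,m)$, where the result is taken as known. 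Your approach is the classical one (closer to \cite{BeSh}, \cite{KrPiSe}) and makes the rearrangement invariance of the associate norm explicit; its cost is the case distinction you flag when $\xi_{g,\mu}(\infty)>0$, which your truncation-plus-monotone-convergence fix handles correctly. The paper's approach sidesteps both that case distinction and the sharpness construction entirely, at the price of invoking the contractivity of $\mathbb{E}^{\mathcal{F}_\mathcal{A}}_\mu$ on the symmetric space $\mathbf{E}$ (implicit in the passage from $\sup_{\|f\|_\mathbf{E}\le 1}$ to $\sup_{\|h\|_{\mathbf{E}_\mathcal{A}}\le 1}$), and it stays closer to the paper's overarching separable-reduction methodology.
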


\begin{proof}
    Both statements 1 and 2 are well-known in the case, when the space $\mathbf{E}(\Omega,\mathcal{F}_\mu,\mu)$ is standard, i.e.
 $\mathbf{E}(\Omega,\mathcal{F}_\mu,\mu) = \mathbf{E}(I, \mathcal{B}_m, m)$, see, for example, \cite{KrPiSe}, ch. II, sec. 4 or \cite{RuGrMuPa}, ch.7.

    For general ($\sigma$-finite, non-atomic) measure space $(\Omega,\mathcal{F}_\mu,\mu)$ and every fixed $g \in \mathbf{E}(\Omega,\mathcal{F}_\mu,\mu)$,
 we can find a countable subalgebra $\mathcal{A} \in \mathfrak{A}$ as in Section \ref{ss The proof of theorems 1} such that
 $g \in \mathbf{L}_0(\Omega,\mathcal{F}_\mathcal{A},\mu_\mathcal{A})$.
    Here $ \mathcal{F}_\mathcal{A} = \mathcal{F}(\mathcal{A})_\mu $ and the restricted measure $\mu_\mathcal{A} =  \mu|_{\mathcal{F}(\mathcal{A})_\mu}$ is $\sigma$-finite, while the measure space
 $(\Omega,\mathcal{F}_\mathcal{A},\mu_\mathcal{A})$ is separable.

    The $\sigma$-finiteness of $\mu_\mathcal{A}$ provides that we may use the conditional expectation

$$
   \mathbb{E}^{\mathcal{F}_\mathcal{A}}_\mu \colon (\mathbf{L}_1+\mathbf{L}_\infty)(\Omega,\mathcal{F}_\mu,\mu) \in f \to \mathbb{E}^{\mathcal{F}_\mathcal{A}}_\mu [f] \in  (\mathbf{L}_1+\mathbf{L}_\infty)(\Omega,\mathcal{F}_\mathcal{A},\mu_\mathcal{A})
$$
 even if $\mu(\Omega) = \infty$, see,  \cite{EdSu}, sec. (2.3.7)-(2.3.9).

    Elementary properties of conditional expectations yield
$$
  \|g\|_{(\mathbf{E}(\Omega,\mathcal{F}_\mu,\mu))^1} = \sup_{\|f\|_{\mathbf{E}(\Omega,\mathcal{F}_\mu,\mu)} \leq 1} \int_\Omega f g\,d\mu =
  \sup_{\|f\|_{\mathbf{E}(\Omega,\mathcal{F}_\mu,\mu)} \leq 1}\int_\Omega \mathbb{E}^{\mathcal{F}_\mathcal{A}}_\mu [f] \, g\,d\mu =
$$
$$
  =\sup_{\|h\|_{\mathbf{E}(\Omega,\mathcal{F}_\mathcal{A},\mu_\mathcal{A})} \leq 1} \int_\Omega h g\,d\mu_\mathcal{A} = \|g\|_{(\mathbf{E}(\Omega,\mathcal{F}_\mathcal{A},\mu_\mathcal{A}))^1} \;.
$$
    On the other hand, the measure space $(\Omega,\mathcal{F}_\mathcal{A},\mu_\mathcal{A})$ is separable. Therefore, we can use an algebraic, order, topological, integral preserving isomorphism
$$
 \Phi_\mathcal{A} \colon \mathbf{L}_0(\Omega,\mathcal{F}_\mathcal{A},\mu_\mathcal{A}) \to \mathbf{L}_0(I,\mathcal{B}_m, m) \;,
$$
 provided by  Theorem \ref{th isomorphism Phi Separable case}.
    The isomorphism induces an isometric isomorphism between the spaces $\mathbf{E}(\Omega,\mathcal{F}_\mathcal{A},\mu_\mathcal{A})$ and $ \mathbf{E}(I, \mathcal{B}_m, m)$,
 and hence, determines an isometric isomorphism between the spaces $(\mathbf{E}(\Omega,\mathcal{F}_\mathcal{A},\mu_\mathcal{A}))^1$ and $ (\mathbf{E}(I, \mathcal{B}_m, m))^1$.

    Whence $ (\mathbf{E}(I, \mathcal{B}_m, m))^1$ is the standard space of  $(\mathbf{E}(\Omega,\mathcal{F}_\mathcal{A},\mu_\mathcal{A}))^1$ and of $(\mathbf{E}(\Omega,\mathcal{F}_\mu,\mu))^1$ as well.
    \end{proof}

   We usually prefer  to write $\mathbf{E}^1(\Omega,\mathcal{F}_\mu,\mu)$ and $\mathbf{E}^1(I, \mathcal{B}_m, m)$ instead $(\mathbf{E}(\Omega,\mathcal{F}_\mu,\mu))^1$ and
 $(\mathbf{E}(I, \mathcal{B}_m, m))^1$.

    The associate space $\mathbf{E}^1$ of a symmetric Banach space $\mathbf{E}$ can be identified with the subset $ \{ \upsilon_g \colon g \in \mathbf{E}^1\}$ of
 the dual space $\mathbf{E}^*$, where
$$
  \upsilon_g(f) := \int fg \,d\mu \,,\, f \in \mathbf{E} \;.
$$
    By definition $\upsilon_g \in \mathbf{E}^*$ and $\|\upsilon_g\|_{\mathbf{E}^*} = \|g\|_{\mathbf{E}^1}$ for every $g \in \mathbf{E}^1$.

    The natural embedding $\upsilon : \mathbf{E}^1 \ni g  \rightarrow \upsilon_g \in \mathbf{E}^*$ is an isometric isomorphism from $\mathbf{E}^1$ onto a closed subspace
 $\upsilon(\mathbf{E}^1)=\{ \upsilon_g, g \in \mathbf{E}^1 \}$ of $\mathbf{E}^*$.

    It should be noted that $\upsilon(\mathbf{E}^1)$ may be a proper subset of $\mathbf{E}^*$ \ in general.
    For example, $\upsilon(\mathbf{E}^1) \subset \mathbf{E}^*$ if $\mathbf{E}=\mathbf{L}_\infty$.

    Theorem \ref{th property (A)} can be completed now by one more equivalent condition.

\begin{theorem}
 \label{th E^1 = E^*}
    Let $\mathbf{E}=\mathbf{E}(\Omega,\mathcal{F}_\mu,\mu)$ be a symmetric Banach space.

    Then  $\upsilon(\mathbf{E}^1) = \{\upsilon_g, \ g \in \mathbf{E}^1 \} = \mathbf{E}^*$ \ iff \ $\mathbf{E}$ has property (A).
\end{theorem}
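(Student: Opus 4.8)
The plan is to prove the two implications separately, using the fact recorded just before the statement that $\upsilon$ is always an isometric embedding, so $\upsilon(\mathbf{E}^1)$ is a closed subspace of $\mathbf{E}^*$ and the only question is whether it exhausts $\mathbf{E}^*$. The decisive elementary observation, which I would isolate at the outset, is that every functional of the form $\upsilon_g$ is \emph{order continuous}: if $0 \le f_n \downarrow 0$ with $f_n \le f \in \mathbf{E}$ and $g \in \mathbf{E}^1$, then $|f_n g| \le |f g| \in \mathbf{L}_1(\Omega,\mathcal{F}_\mu,\mu)$ (the product of an $\mathbf{E}$-function and an $\mathbf{E}^1$-function is integrable, by the very definition of the associate norm), so dominated convergence gives $\upsilon_g(f_n) = \int_\Omega f_n g\,d\mu \to 0$. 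This single fact drives both directions.

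For the implication that property (A) forces $\upsilon(\mathbf{E}^1) = \mathbf{E}^*$, I would fix $\phi \in \mathbf{E}^*$ and reconstruct its density by a Radon--Nikodym argument. Since $\mathbf{E}$ has property (A), Theorem~\ref{th property (A)} gives that $\mathbf{E}$ is minimal, so the simple integrable functions $\mathbf{F}_1$ are dense in $\mathbf{E}$, and it suffices to represent $\phi$ on $\mathbf{F}_1$ and extend by continuity. Set $\nu(A) = \phi(\mathbf{1}_A)$ for $A \in \mathcal{F}_\mu$ of finite measure. Property (A) makes $\nu$ countably additive, since for a disjoint decomposition the tails $\mathbf{1}_{\bigsqcup_{n>N}B_n} \downarrow 0$ in $\mathbf{E}$, and $\nu$ is trivially absolutely continuous with respect to $\mu$; as $\mu$ is $\sigma$-finite, Radon--Nikodym yields $g \in \mathbf{L}_0(\Omega,\mathcal{F}_\mu,\mu)$ with $\nu(A) = \int_A g\,d\mu$. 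Then $\phi(f) = \int_\Omega f g\,d\mu$ for all $f \in \mathbf{F}_1$, and the bound $|\int_\Omega f g\,d\mu| = |\phi(f)| \le \|\phi\|_{\mathbf{E}^*}\,\|f\|_\mathbf{E}$ shows $\|g\|_{\mathbf{E}^1} \le \|\phi\|_{\mathbf{E}^*} < \infty$, so $g \in \mathbf{E}^1$ and, by density, $\phi = \upsilon_g$.

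For the converse I would argue by contraposition. If $\mathbf{E}$ fails property (A), then by Theorem~\ref{th property (A)} some $f \in \mathbf{E}$ lacks an absolutely continuous norm, so there are $\varepsilon_0 > 0$ and sets $A_n$ with $\mu(A_n) \to 0$ but $\|\mathbf{1}_{A_n} f\|_\mathbf{E} \ge \varepsilon_0$; passing to a subsequence I may arrange $A_n \downarrow$, so that $f_n := \mathbf{1}_{A_n}|f| \downarrow 0$. For each $n$ pick a \emph{positive} norming functional $\phi_n \in \mathbf{E}^*$, $\|\phi_n\|_{\mathbf{E}^*} \le 1$, with $\phi_n(f_n) \ge \varepsilon_0$. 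Positivity and $f_n \le f_k$ for $n \ge k$ give $\phi_n(f_k) \ge \phi_n(f_n) \ge \varepsilon_0$ whenever $n \ge k$. Let $\phi$ be the weak-$*$ limit of $(\phi_n)$ along a free ultrafilter $\mathcal{U}$; by Banach--Alaoglu $\phi \in \mathbf{E}^*$, and since $\{n : n \ge k\} \in \mathcal{U}$ we get $\phi(f_k) = \lim_{\mathcal{U}} \phi_n(f_k) \ge \varepsilon_0$ for every $k$. Thus $\phi(f_k) \not\to 0$, whereas the order-continuity observation forces $\upsilon_g(f_k) \to 0$ for every $g \in \mathbf{E}^1$; hence $\phi \ne \upsilon_g$ for all $g$ and $\upsilon(\mathbf{E}^1) \subsetneq \mathbf{E}^*$.

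I expect the genuine difficulty to lie in this last construction, namely producing a single bounded functional that stays nonzero on the \emph{entire} tail sequence rather than only along a diagonal, thereby witnessing a singular (non-order-continuous) functional. The ultrafilter limit makes existence painless, but the crux is the monotonicity bookkeeping: arranging $A_n \downarrow$ from the start and taking the $\phi_n$ positive is exactly what lets the uniform lower bound $\phi_n(f_k) \ge \varepsilon_0$ survive the passage to the weak-$*$ limit. The remaining verifications---countable additivity and absolute continuity of $\nu$, and the $\mathbf{E}^1$-membership estimate---are routine consequences of property (A) and the $\sigma$-finiteness of $\mu$.
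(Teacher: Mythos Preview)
The paper does not supply its own proof of this theorem; it is stated as a classical fact in the same spirit as the neighbouring Theorems~\ref{th property (A)} and~\ref{th property (C)}, which the paper attributes to Kantorovich--Akilov and Lindenstrauss--Tzafriri. So there is nothing to compare against, and the relevant question is simply whether your argument is sound.

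It is. Your two halves are exactly the standard proof one finds in the cited references: the Radon--Nikodym construction of the density for the forward direction, and the manufacture of a singular (non-order-continuous) functional for the converse. Two places deserve a word more of care, though neither is a genuine gap. First, ``passing to a subsequence I may arrange $A_n \downarrow$'' is not literally achievable by selection alone; what you want is to pass to a subsequence with $\sum_k \mu(A_{n_k}) < \infty$ and then replace $A_{n_k}$ by $B_k := \bigcup_{j \ge k} A_{n_j}$, which is decreasing, has $\mu(B_k) \to 0$, and still satisfies $\|\mathbf{1}_{B_k} f\|_\mathbf{E} \ge \|\mathbf{1}_{A_{n_k}} f\|_\mathbf{E} \ge \varepsilon_0$. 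Second, the availability of \emph{positive} norming functionals $\phi_n$ is not free from Hahn--Banach; it uses that the dual of a Banach lattice is a Banach lattice, so that any norming $\phi$ may be replaced by its modulus $|\phi|$, which has the same norm and satisfies $|\phi|(f_n) \ge |\phi(f_n)|$. With these two clarifications your contraposition goes through cleanly, and the forward direction is already complete as written.
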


\begin{remark}

Above definitions and results formally make sense for general symmetric quasi-Banach spaces $\mathbf{E}(\Omega,\mathcal{F}_\mu,\mu)$.
    However, since we deal only with non-atomic measure spaces $ (\Omega,\mathcal{F}_\mu,\mu)$, the dual space $\mathbf{E}^*$ and hence, the associate
 space $\mathbf{E}^1$ may be trivial as soon as $\mathbf{E}$ is not normable.

    For, example, let $\mathbf{E}=\mathbf{L}_p, \ 0 < p \leq \infty$.
    For $1 \leq p \leq \infty$, we have $(\mathbf{L}_p)^1 = \mathbf{L}_q$ with $1/p + 1/q =1$.

    The reader can use \cite{Kal}, to find
         useful results and references about dual spaces $\mathbf{E}^*$ of non-normable
 quasi-Banach spaces $\mathbf{E}$.
 \end{remark}

Returning to symmetric Banach spaces we consider the associate spaces $\mathbf{E}^{11} = (\mathbf{E}^1)^1$, \ $\mathbf{E}^{111} = (E^{11})^1$ and  etc.

It follows directly from the definitions that $\mathbf{E} \subseteq \mathbf{E}^{11}$ and  $\mathbf{E}^1 = \mathbf{E}^{111}$, although the inclusion
 $\mathbf{E} \subseteq \mathbf{E}^{11}$ may be strict.
     Moreover,
$$
\|f\|_{\mathbf{E}^{11}} \leq \|f\|_\mathbf{E }\;,\; f \in \mathbf{E} \;,
$$
 i.e. the natural embedding $\mathbf{E} \rightarrow \mathbf{E}^{11}$ is a contraction.

Condition (C) just guarantees that the mapping is an isometry.
    Namely,

\begin{theorem}
 \label{th property (C)}
 Let $\mathbf{E}=\mathbf{E}(\Omega,\mathcal{F}_\mu,\mu)$ be a symmetric Banach space.
   Then the following conditions are equivalent:
 \begin{enumerate}
  \item
   $\;\mathbf{E}$ has property (C).
  \item
   $\; \|f\|_\mathbf{E} = \sup\,\{\upsilon_g(f): \|g\|_{\mathbf{E}^1} \leq 1 \}$.
  \item
   $\; \|f\|_{\mathbf{E}^{11}} =  \|f\|_\mathbf{E} \,,\, f\in \mathbf{E} $.
 \end{enumerate}
\end{theorem}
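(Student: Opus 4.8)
The plan is to prove $2\Leftrightarrow3$ by unwinding definitions, $2\Rightarrow1$ by monotone convergence, and $1\Rightarrow3$ by reducing to simple functions of finite support, for which the biassociate norm can be computed by a finite-dimensional duality. The equivalence $2\Leftrightarrow3$ is immediate: by the very definition of the biassociate norm, $\|f\|_{\mathbf{E}^{11}}=\sup\{\int_\Omega fg\,d\mu:\ \|g\|_{\mathbf{E}^1}\le1\}=\sup\{\upsilon_g(f):\ \|g\|_{\mathbf{E}^1}\le1\}$, so condition~2 is literally the assertion $\|f\|_{\mathbf{E}}=\|f\|_{\mathbf{E}^{11}}$ of condition~3. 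I will use throughout the basic inequality $\int_\Omega fg\,d\mu\le\|f\|_{\mathbf{E}}\,\|g\|_{\mathbf{E}^1}$ (so that $\|f\|_{\mathbf{E}^{11}}\le\|f\|_{\mathbf{E}}$ always) together with the ideal monotonicity of all three norms. For $2\Rightarrow1$, let $0\le f_n\uparrow f\in\mathbf{E}$ and set $s:=\sup_n\|f_n\|_{\mathbf{E}}\le\|f\|_{\mathbf{E}}$. For every $g\ge0$ with $\|g\|_{\mathbf{E}^1}\le1$, monotone convergence gives $\int f_ng\,d\mu\uparrow\int fg\,d\mu$, while $\int f_ng\,d\mu\le\|f_n\|_{\mathbf{E}}\le s$; hence $\upsilon_g(f)\le s$, and taking the supremum over such $g$ and invoking condition~2 yields $\|f\|_{\mathbf{E}}\le s$, i.e. property~(C).

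The substantive direction is $1\Rightarrow3$. The first step, which uses no (C), is to show $\|s\|_{\mathbf{E}}=\|s\|_{\mathbf{E}^{11}}$ for every simple $s\ge0$ vanishing outside a set of finite measure. I would fix a finite partition $\{B_i\}$ of the support into sets of positive finite measure refining the level sets of $s$, let $V$ be the finite-dimensional space of functions constant on each $B_i$, and let $P$ be the conditional expectation onto $V$. Since $P$ produces submajorization $\xi_{Pf}\prec\prec\xi_f$, it is a contraction of the symmetric \emph{Banach} space $\mathbf{E}$, and $\int(Pf)g\,d\mu=\int fg\,d\mu$ for $g\in V$; consequently, for $g\in V$ the associate norm is computed inside $V$, namely $\|g\|_{\mathbf{E}^1}=\sup\{\int hg\,d\mu:\ h\in V,\ \|h\|_{\mathbf{E}}\le1\}$. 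Thus on the finite-dimensional $V$ the norms $\|\cdot\|_{\mathbf{E}}$ and $\|\cdot\|_{\mathbf{E}^1}$ are dual to one another for the nondegenerate pairing $\int fg\,d\mu$, and finite-dimensional biduality gives $\|s\|_{\mathbf{E}}=\sup\{\int sg\,d\mu:\ g\in V,\ \|g\|_{\mathbf{E}^1}\le1\}\le\|s\|_{\mathbf{E}^{11}}$, whence equality.

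The second step invokes (C). Given $f\in\mathbf{E}$, I would write $\Omega=\bigcup_k\Omega_k$ with $\Omega_k\uparrow$ of finite measure and approximate $|f|$ from below by bounded simple functions $f_n\uparrow|f|$, each supported in some $\Omega_k$. By property~(C), $\|f_n\|_{\mathbf{E}}\uparrow\||f|\|_{\mathbf{E}}=\|f\|_{\mathbf{E}}$, while the ideal property of $\mathbf{E}^{11}$ gives $\|f_n\|_{\mathbf{E}^{11}}\le\|f\|_{\mathbf{E}^{11}}$. Combining this with the first step, $\|f\|_{\mathbf{E}}=\lim_n\|f_n\|_{\mathbf{E}}=\lim_n\|f_n\|_{\mathbf{E}^{11}}\le\|f\|_{\mathbf{E}^{11}}\le\|f\|_{\mathbf{E}}$, forcing $\|f\|_{\mathbf{E}^{11}}=\|f\|_{\mathbf{E}}$, which is condition~3.

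I expect the contraction property of the conditional expectation, equivalently that a symmetric \emph{Banach} norm respects submajorization, to be the main technical point; it is precisely here that the Banach rather than quasi-Banach hypothesis enters. As an alternative to the direct finite-support argument, one may first reduce to a standard measure space via Theorems~\ref{th main theorem 1} and~\ref{th main theorem 2}, using the invariance of property~(C) and the strict equimeasurability of the associate spaces established above, and then quote the classical Lorentz--Luxemburg theorem for standard symmetric Banach function spaces from \cite{BeSh} or \cite{RuGrMuPa}.
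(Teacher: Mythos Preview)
The paper does not prove this theorem; it simply attributes it to Nakano--Amemiya--Mori and refers the reader to \cite{MoAmNa} and \cite{KanAk}, Theorem~X.4.7. Your proposal, by contrast, sketches a self-contained argument along the classical Lorentz--Luxemburg pattern: establish $\|s\|_{\mathbf{E}}=\|s\|_{\mathbf{E}^{11}}$ for finitely-supported simple $s$ via finite-dimensional biduality, then pass to general $f$ using property~(C). The alternative you mention at the end --- reduce to the standard measure space via Theorems~\ref{th main theorem 1} and~\ref{th main theorem 2} and quote the literature --- is essentially what the paper does.

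Your outline is sound; the only delicate point, which you correctly flag, is the contraction of the conditional expectation $P$ onto the finite partition. Over a non-atomic space this can indeed be established without invoking~(C): for simple $s$, $Ps$ is a uniform limit of finite convex combinations $\tfrac{1}{N}\sum_{k=0}^{N-1} s\circ\phi_{k/N}$ of measure-preserving rearrangements of $s$, each of norm $\|s\|_{\mathbf{E}}$; bounded $f$ are then handled by uniform approximation by simple functions; and for general $f\in\mathbf{E}$ one uses that $Pf$ lies in the finite-dimensional $V$, where all norms are equivalent, to pass to the limit. Be aware, however, that the blanket assertion ``a symmetric Banach norm respects Hardy--Littlewood--P\'olya submajorization'' is, in several standard references (e.g.\ \cite{BeSh}), proved only under the Fatou property; it is therefore safer to argue the specific contraction you need rather than invoke the general HLP principle as a black box.
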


The theorem is usually referred to as Nakano-Amemiya-Mori theorem, see,
\cite{MoAmNa} or \cite{KanAk}, Theorem X.4.7. A subspace $\mathbf{G}\subseteq \mathbf{E}^*$ is said to be {\it norming} if  $\; \|f\|_\mathbf{E} = \sup\,\{g \in \mathbf{G}, \ \|g\|_{\mathbf{E}^1} \leq 1\}$,
 see, \cite{LTz}, sec. 1.b.
     Condition 2 in the above theorem means that the subspace $\upsilon(\mathbf{E}^1)$  of $\mathbf{E}^*$ is norming.

    Let, for example, $\mathbf{E}=\mathbf{L}_\infty$ with a new norm
$$
  \|f\|_\mathbf{E} = \|f\|_{\mathbf{L}_\infty} + \xi_{f,\mu}(\infty), \  f \in \mathbf{E} \  \;\mbox{where}\; \ \xi_{f,\mu}(\infty) = \lim_{x \to \infty} \xi_{f,\mu}(x) \;.
$$
    Then $(\mathbf{E},\|\cdot\|_\mathbf{E})$ is a symmetric Banach space which fails to have Property (C).

   In this example the natural embedding $\mathbf{E} \to \|\cdot\|_{\mathbf{L}_\infty}$ is not  isometric.  However, the norms $\|\cdot\|_{\mathbf{E}}$ and $\|\cdot\|_{\mathbf{L}_\infty}$ are equivalent, since for every $f \in \mathbf{E} = \mathbf{L}_\infty$,
$$
     \xi_{f,\mu}(\infty) \leq \xi_{f,\mu}(0) =  \|f\|_{\mathbf{L}_\infty}, \ f \in \mathbf{E }= \mathbf{L}_\infty
     \Longrightarrow   \|\cdot\|_{\mathbf{L}_\infty} \leq \|\cdot\|_\mathbf{E} \leq  2\|\cdot\|_{\mathbf{L}_\infty} \;.
$$
 Therefore, the identity $\mathbf{E} \to \mathbf{E}^{11}$ is open.

 In general case the embedding $\mathbf{E} \to \mathbf{E}^{11}$ is open iff $\mathbf{E}$ is closed in $\mathbf{E}^{11}$ by Open Mapping Theorem.

    However, there exist symmetric Banach space for which $\mathbf{E}$ is not closed in $\mathbf{E}^{11}$  and the identity mapping $\mathbf{E}\rightarrow \mathbf{E}^{11}$ is not open.

\medskip

\subsection{ Maximality. Properties (B) and (F)}
 \label{ss Maximality. Properties (B) and (F)} $\;$

\medskip

 A symmetric space $\mathbf{E}=\mathbf{E}(\Omega,\mathcal{F}_\mu,\mu)$ is said to have property (B) ({\it monotonically complete norm}) if
 \begin{itemize}
  \item[(B)]
   $\;$ If $ 0  \leq f_n \uparrow \,,\, f_n \in \mathbf{E} \,,\, \sup_n \|f_n\|_\mathbf{E} < \infty$ \ then \
    $f_n \uparrow f$ for some $f \in \mathbf{E}$.
 \end{itemize}

   Note that this property (as well as property (A) and property(C)) is invariant for equimeasurable symmetric spaces.

    In the case of symmetric Banach spaces we may use the embedding $\mathbf{E} \subseteq \mathbf{E}^{11}$.

\begin{theorem}
 \label{th property (B)}
 Let $\mathbf{E}=\mathbf{E}(\Omega,\mathcal{F}_\mu,\mu)$ be a symmetric Banach space.
   Then the following conditions are equivalent:
    \begin{enumerate}
  \item
   $\;\mathbf{E}= \mathbf{E}^{11}$ as sets, i.e. $\mathbf{E}$ is maximal.
  \item
  $\; \mathbf{E} = \mathbf{G}^1$ for some symmetric space $\mathbf{G}=\mathbf{G}(\Omega,\mathcal{F}_\mu,\mu)$.
  \item
  $\;\mathbf{E}$ has property (B).
 \end{enumerate}
\end{theorem}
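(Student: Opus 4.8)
The plan is to reduce to the standard measure space and then run the cycle $(1)\Rightarrow(2)\Rightarrow(1)$ by the algebra of associate spaces, $(1)\Rightarrow(3)$ by a monotone convergence estimate, and $(3)\Rightarrow(1)$ by a truncation argument whose heart is a resonance identity for simple integrable functions. First I would observe that all three conditions are invariant under passage to the equimeasurable standard symmetric space. Indeed, property (B) is invariant for equimeasurable symmetric spaces (as already noted), and by the Proposition of Subsection \ref{ss Associate symmetric spaces} the associate spaces $\mathbf{E}^1$ and $\mathbf{E}^{11}$ of $\mathbf{E}(\Omega,\mathcal{F}_\mu,\mu)$ are strictly equimeasurable to the corresponding associate spaces of the standard space $\mathbf{E}(I,\mathcal{B}_m,m)$ furnished by Theorem \ref{th main theorem 2}. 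Hence the equality $\mathbf{E}=\mathbf{E}^{11}$ and the representability $\mathbf{E}=\mathbf{G}^1$ hold on $(\Omega,\mathcal{F}_\mu,\mu)$ iff they hold on $(I,\mathcal{B}_m,m)$, so from now on I may assume the measure space is standard, and in particular $\sigma$-finite and non-atomic, so that every non-negative measurable function is the pointwise increasing limit of simple integrable functions from $\mathbf{F}_1$.

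The equivalence $(1)\Leftrightarrow(2)$ is purely formal. If $\mathbf{E}=\mathbf{E}^{11}$, take $\mathbf{G}=\mathbf{E}^1$; by the Proposition of Subsection \ref{ss Associate symmetric spaces} this is a symmetric Banach space on the same measure space, and $\mathbf{G}^1=\mathbf{E}^{11}=\mathbf{E}$. Conversely, if $\mathbf{E}=\mathbf{G}^1$ for some symmetric space $\mathbf{G}$, then the identity $\mathbf{E}^1=\mathbf{E}^{111}$ recorded above, applied to $\mathbf{G}$, gives $\mathbf{E}^{11}=\mathbf{G}^{111}=\mathbf{G}^1=\mathbf{E}$, which is $(1)$.

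For $(1)\Rightarrow(3)$ I would argue by monotone convergence. Let $0\le f_n\uparrow$ with $f_n\in\mathbf{E}$ and $M:=\sup_n\|f_n\|_\mathbf{E}<\infty$, and set $f=\sup_n f_n$. For every $h\in\mathbf{E}^1$ with $\|h\|_{\mathbf{E}^1}\le 1$, the monotone convergence theorem together with $\|\cdot\|_{\mathbf{E}^{11}}\le\|\cdot\|_\mathbf{E}$ gives
\begin{equation*}
\int_\Omega f\,|h|\,d\mu=\lim_n\int_\Omega f_n\,|h|\,d\mu\le\sup_n\|f_n\|_{\mathbf{E}^{11}}\,\|h\|_{\mathbf{E}^1}\le M.
\end{equation*}
Taking the supremum over such $h$ yields $\|f\|_{\mathbf{E}^{11}}\le M<\infty$, so $f\in\mathbf{E}^{11}=\mathbf{E}$ by $(1)$; hence $f_n\uparrow f\in\mathbf{E}$, which is property (B).

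The implication $(3)\Rightarrow(1)$ is where the real work lies. Since always $\mathbf{E}\subseteq\mathbf{E}^{11}$, it suffices to show $\mathbf{E}^{11}\subseteq\mathbf{E}$. Fix $0\le f\in\mathbf{E}^{11}$ and choose $f_n\in\mathbf{F}_1$ with $0\le f_n\uparrow f$. Each $f_n$ lies in $\mathbf{L}_1\cap\mathbf{L}_\infty\subseteq\mathbf{E}$, and because $\mathbf{E}^{11}$ is an ideal lattice we have $\|f_n\|_{\mathbf{E}^{11}}\le\|f\|_{\mathbf{E}^{11}}<\infty$. The key step is the \emph{resonance identity} $\|g\|_\mathbf{E}=\|g\|_{\mathbf{E}^{11}}$ for every simple integrable $g\in\mathbf{F}_1$; granting it, $\sup_n\|f_n\|_\mathbf{E}=\sup_n\|f_n\|_{\mathbf{E}^{11}}\le\|f\|_{\mathbf{E}^{11}}<\infty$, so property (B) produces some $g\in\mathbf{E}$ with $f_n\uparrow g$, and since also $f_n\uparrow f$ pointwise we obtain $f=g\in\mathbf{E}$. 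This identity is the main obstacle: unlike the \emph{global} isometry $\|\cdot\|_\mathbf{E}=\|\cdot\|_{\mathbf{E}^{11}}$, which by the Nakano--Amemiya--Mori Theorem \ref{th property (C)} is equivalent to property (C) and may genuinely fail, its restriction to finite-measure supported simple functions holds unconditionally. I would prove it by fixing the finite-measure support $S$ of $g$ and invoking the duality between $\mathbf{L}_\infty(S)$ and $\mathbf{L}_1(S)$: a Hahn--Banach functional norming $g$ in $\mathbf{E}$ can be chosen with an absolutely continuous representative $h\in\mathbf{L}_\infty(S)\subseteq\mathbf{E}^1$, so that $\int_\Omega g\,h\,d\mu=\|g\|_\mathbf{E}$ with $\|h\|_{\mathbf{E}^1}\le 1$, whence $\|g\|_{\mathbf{E}^{11}}\ge\|g\|_\mathbf{E}$, the reverse inequality being the universal contraction. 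The only delicate point is discarding the singular part of the norming functional, which is legitimate precisely because $g$ is supported on the finite-measure set $S$; alternatively this resonance identity may be quoted from the Lorentz--Luxemburg theory, see \cite{LTz} and \cite{KrPiSe}.
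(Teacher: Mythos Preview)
The paper does not prove this theorem directly; it is stated without proof and attributed (together with Theorem~\ref{th property (BC)}) to \cite{KanAk}, Theorem~X.4. Your argument therefore goes beyond what the paper offers, and the overall architecture --- reduction to the standard space, the formal cycle $(1)\Leftrightarrow(2)$, the monotone-convergence proof of $(1)\Rightarrow(3)$, and the truncation approach to $(3)\Rightarrow(1)$ via the resonance identity $\|g\|_\mathbf{E}=\|g\|_{\mathbf{E}^{11}}$ for $g\in\mathbf{F}_1$ --- is correct.

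There is, however, a genuine gap in your justification of the resonance identity. You claim that the singular part of a Hahn--Banach norming functional can be discarded ``precisely because $g$ is supported on the finite-measure set $S$.'' This is false: singular functionals on $\mathbf{E}^*$ need not vanish on simple functions with finite-measure support. Already for $\mathbf{E}=\mathbf{L}_\infty[0,1]$, any Hahn--Banach extension to $\mathbf{L}_\infty$ of the functional $f\mapsto\lim_{x\to 1^-}f(x)$ on $C[0,1]$ is a purely singular element of $\mathbf{E}^*$ taking the value $1$ on $\mathbf{1}_{[0,1]}$; finite-measure support does nothing to kill it, and your inline argument collapses. The identity itself is nonetheless true, and the clean route uses the paper's own Theorem~\ref{th minimality implies (C)}: the minimal part $\mathbf{E}^0=cl_\mathbf{E}(\mathbf{F}_1)$ has property~(C), and a monotone-convergence computation (approximate any $f$ in the unit ball of $\mathbf{E}$ from below by simple integrable functions to see that the suprema defining $\|\cdot\|_{(\mathbf{E}^0)^1}$ and $\|\cdot\|_{\mathbf{E}^1}$ agree) gives $(\mathbf{E}^0)^1=\mathbf{E}^1$ with equal norms, hence $(\mathbf{E}^0)^{11}=\mathbf{E}^{11}$. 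Then for $g\in\mathbf{F}_1\subseteq\mathbf{E}^0$ one obtains $\|g\|_\mathbf{E}=\|g\|_{\mathbf{E}^0}=\|g\|_{(\mathbf{E}^0)^{11}}=\|g\|_{\mathbf{E}^{11}}$, exactly what your truncation argument needs. Your citation fallback to the Lorentz--Luxemburg theory is acceptable, but the Hahn--Banach sketch as written does not stand.
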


    Combining Theorems \ref{th property (C)} and \ref{th property    (B)} we have

\begin{theorem}
 \label{th property (BC)}
 Let $\mathbf{E}=\mathbf{E}(\Omega,\mathcal{F}_\mu,\mu)$ be a symmetric space.
   Then the following conditions are equivalent:
   \begin{enumerate}
  \item \
   $\mathbf{E} = \mathbf{E}^{11}$ and $\|\cdot\|_\mathbf{E} = \|\cdot\|_{\mathbf{E}^{11}} $.
  \item \
  $\mathbf{E} = \mathbf{G}^1$ and $\|\cdot\|_\mathbf{E} = \|\cdot\|_{\mathbf{G}^1} $ for some symmetric space  $\mathbf{G}=\mathbf{G}(\Omega,\mathcal{F}_\mu,\mu)$.
  \item \
  $\mathbf{E}$ has both property (B) and (C).
  \item \
    If $\kappa \colon\mathbf{ E} \to \mathbf{E}^{**}$ is the canonic embedding, then there exists projection $\pi \colon \mathbf{E}^{**} \to \kappa(\mathbf{E})$ with $\|\pi\|=1$.
   \item \
     Every centered sequence of closed balls has nonempty intersection in $\mathbf{E}$.
 \end{enumerate}
\end{theorem}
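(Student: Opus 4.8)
The plan is to deduce the whole statement by combining the two preceding theorems with the elementary fact that an associate norm is always order semi-continuous, and then to attach the two Banach-geometric conditions along a cycle. First I would settle the equivalence of conditions~1,~2 and~3, which is the algebraic heart of the matter. By Theorem~\ref{th property (C)} property~(C) is equivalent to the isometric identity $\|f\|_{\mathbf{E}^{11}} = \|f\|_\mathbf{E}$, while by Theorem~\ref{th property (B)} property~(B) is equivalent both to the set identity $\mathbf{E} = \mathbf{E}^{11}$ and to $\mathbf{E} = \mathbf{G}^1$ for some symmetric $\mathbf{G}$. Since $\mathbf{E} \subseteq \mathbf{E}^{11}$ with $\|\cdot\|_{\mathbf{E}^{11}} \le \|\cdot\|_\mathbf{E}$ always holds, (B) and (C) together say exactly that $\mathbf{E} = \mathbf{E}^{11}$ isometrically, which is condition~1; this gives $3 \Leftrightarrow 1$. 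For $1 \Rightarrow 2$ I take $\mathbf{G} = \mathbf{E}^1$, so that $\mathbf{G}^1 = \mathbf{E}^{11} = \mathbf{E}$ isometrically. For $2 \Rightarrow 3$ I note that any associate space $\mathbf{G}^1$ has property~(B) by Theorem~\ref{th property (B)} and has an order semi-continuous norm: if $0 \le g_n \uparrow g \in \mathbf{G}^1$ then, applying the monotone convergence theorem inside $\|g\|_{\mathbf{G}^1} = \sup\{\int fg\,d\mu : \|f\|_\mathbf{G} \le 1\}$, one gets $\sup_n\|g_n\|_{\mathbf{G}^1} = \|g\|_{\mathbf{G}^1}$, i.e.\ property~(C); the isometry in condition~2 then transports both to $\mathbf{E}$. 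Hence $1 \Leftrightarrow 2 \Leftrightarrow 3$.

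It remains to connect these lattice conditions with conditions~4 and~5, which I would do along the cycle $3 \Rightarrow 4 \Rightarrow 5 \Rightarrow 3$. For $3 \Rightarrow 4$ the point is that a symmetric space with both (B) and (C) is a maximal (perfect) Banach lattice, so it sits as a band in its bidual $\mathbf{E}^{**}$ and the corresponding band projection $\pi \colon \mathbf{E}^{**} \to \kappa(\mathbf{E})$ is positive and of norm one. Here one must be careful: $\mathbf{E}$ need not be a dual Banach space (for instance $\mathbf{E} = \mathbf{L}_1$ has no predual, yet is the absolutely-continuous band in $\mathbf{L}_\infty^{*} = \mathbf{L}_1^{**}$), so $\pi$ cannot be produced simply by exhibiting a predual; it is the band projection coming from the Fatou--Levi structure, and I would invoke the classical result of Banach-lattice theory, see \cite{KanAk}.

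The implication $4 \Rightarrow 5$ is purely metric and uses no lattice structure. Given a centered sequence of closed balls $\overline B_\mathbf{E}(x_n,r_n)$, I pass to the balls $\overline B_{\mathbf{E}^{**}}(\kappa x_n, r_n)$, which are weak-$*$ closed in the dual space $\mathbf{E}^{**}$; intersecting each with the weak-$*$ compact ball $\overline B_{\mathbf{E}^{**}}(\kappa x_1, r_1)$ produces a family of weak-$*$ closed subsets of a compact set which inherits the finite intersection property from centeredness, so their total intersection contains some $\psi$. Applying the norm-one projection and using $\pi\kappa = \mathrm{id}$ yields $\pi\psi = \kappa y$ with $\|y - x_n\|_\mathbf{E} = \|\pi(\psi - \kappa x_n)\| \le \|\psi - \kappa x_n\| \le r_n$ for every $n$, whence $y \in \bigcap_n \overline B_\mathbf{E}(x_n,r_n)$, which is condition~5.

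The hard part is the closing implication $5 \Rightarrow 3$, where one must recover the order-theoretic properties (B) and (C) from a statement about intersections of balls. The natural attempt is, given $0 \le f_n \uparrow$ with $\sup_n\|f_n\|_\mathbf{E} < \infty$, to form the balls $\overline B_\mathbf{E}(f_n, \rho_n)$ with $\rho_n = \sup_{m \ge n}\|f_m - f_n\|_\mathbf{E} \le 2\sup_n\|f_n\|_\mathbf{E}$; these are centered, since every tail element $f_N$ with $N \ge n$ lies in $\overline B_\mathbf{E}(f_n,\rho_n)$, so by~5 their intersection contains a point $h$, which after replacing $h$ by $h^{+}$ (the lattice operations being contractions for the lattice norm) is nonnegative and close to the whole sequence. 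The obstacle is that when $(f_n)$ is not norm-Cauchy, that is $\rho_n \not\to 0$, which is the generic situation in the absence of order continuity, this $h$ need not be the order supremum, and converting it into an upper bound of $(f_n)$ lying in $\mathbf{E}$ is precisely the delicate interplay between ball geometry and the lattice order. I expect this to be the main difficulty, and I would resolve it through the classical theory of monotonically complete lattices, where this ball-intersection characterization of the Levi and Fatou properties is established, see \cite{KanAk}. Since every step of the cycle $3 \Rightarrow 4 \Rightarrow 5 \Rightarrow 3$ is intrinsic, it applies verbatim to non-separable measure spaces and requires no reduction to the standard case.
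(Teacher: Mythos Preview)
Your argument for the equivalence $1\Leftrightarrow 2\Leftrightarrow 3$ is correct and is exactly what the paper intends: it is the immediate combination of Theorems~\ref{th property (C)} and~\ref{th property (B)} together with the elementary observation that an associate norm always satisfies~(C). Your metric argument for $4\Rightarrow 5$ via weak-$*$ compactness in $\mathbf{E}^{**}$ and projecting back is clean and correct.

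You should be aware, however, that the paper itself does not prove this theorem at all: immediately after the statement it simply writes ``See \cite{KanAk}, Theorem~X.4, where this and previous results are treated for general Banach $K$-spaces.'' So there is no ``paper's own proof'' to compare against beyond that citation. Your proposal is therefore strictly more detailed than what the paper offers: you spell out the cycle $1\Leftrightarrow 2\Leftrightarrow 3$ explicitly and give a self-contained argument for $4\Rightarrow 5$, while for the genuinely delicate steps $3\Rightarrow 4$ (that a Banach lattice with (B) and (C) is a norm-one complemented band in its bidual) and $5\Rightarrow 3$ (recovering the Levi--Fatou properties from the ball-intersection property) you, like the paper, fall back on \cite{KanAk}. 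That is entirely appropriate here; these two implications are precisely the content of the classical $K$-space theory and are not reproved in the present paper either. Your honest flagging of the obstacle in $5\Rightarrow 3$ --- that the common point $h$ produced by ball-intersection need not be the order supremum when $(f_n)$ is not norm-Cauchy --- is accurate, and resolving it does require the machinery in the cited reference.
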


See, \cite{KanAk}, Theorem X.4, where this and previous results are treated for general Banach  $K$-spaces.

Combination of properties (B) and (C) (used in the above theorem) means
 \begin{itemize}
  \item[(BC)]
    $\;$ If $ 0  \leq f_n \uparrow  \,,\, f_n \in \mathbf{E} $ and $ \sup_n \|f_n\|_\mathbf{E} < \infty $ \ then  \ $f_n \uparrow f$ for some
    $f \in \mathbf{E}$ and $\sup_n \|f_n\|_{\mathbf{E}} = \|f\|_{\mathbf{E}}$.
\end{itemize}
    It makes sense for general symmetric quasi-Banach spaces and can be reformulated as the following Fatou's property (F),
 see. \cite{LTz}, sec. 1.b.
 \begin{itemize}
  \item[(F)]
    $\;$ If $ \{f_n\} \subset \mathbf{E} \,,\, f_n\stackrel{(a.e)}{\rightarrow}  f $ and $\sup_n \|f_n\|_\mathbf{E} < \infty $ then
    $f \in E$ and $\|f\|_\mathbf{E} \leq \liminf_n \|f_n\|_\mathbf{E} $.
\end{itemize}

   \medskip

   In the case $\mathbf{E}=\mathbf{L}_1$ property (F) is just the assertion of Fatou's theorem.


\begin{thebibliography}{20}

\bibitem{Ao}
 T.~Aoki,
 Locally Bounded Linear Topological Spaces,
 {\it Proc. Imp. Acad. Tokyo}, vol.~18, p. 588--594 (1947).


\bibitem{BeSh}
 C.~Bennet, R.~Sharpley,
 Interpolation of operators, 469 pp.,  Academic Press., London,
 (1988).


\bibitem{EdSu}
 G.~A.~Edgar, L.~ Sucheston,
 Stopping times and directed processes, 428 pp., Cambridge University Press.,
 Springer, (1992).


\bibitem{Fr}
 D.~H.~Fremlin,
 Measure theory. Vol. 2. Broad foundation, 563 pp.,
 Reader in Mathematics. University of Essex,  (2003).


\bibitem{Kal}
 N.~J.~Kalton, Quasi-Banach spaces, {\it
 In Handbook of the Geomety of Banach spaces, Eslevier}, Ch. 25, p. 1099-1106 (2003).


\bibitem{KanAk}
 L.~V.~Kantorovich, G.~V.~Akilov,
 Functional analysis, 589 pp.,
 Pergamon Press, (1982).


\bibitem{KrPiSe}
 S.~G.~Krein, Yu.~I.~Petunin, E.~M.~Semenov,
 Interpolation of liniar operators,
 Trans. Math. Mon., 54, AMS, Perovidance, 375 pp., 1982.


\bibitem{LTz}
 J.~Lindenstrauss, L.~Tzafriri,
 Classical Banach Spaces II. Function Spaces, Springer, 243 pp.,
  (1979).


\bibitem{MoAmNa}
 T.~Mori, I.~Amemiya, N.~Nakano,
 On the reflexivity of semi-continuous norms, {\it
 Prooc. Jap. Acad.}, vol.~31, p. 684-685 (1955).


\bibitem{MuRu}
 M.~A.~Muratov, B.-Z.~A.~Rubshtein.
 Main embedding theorems for symmetric spaces of measurable functions, {\it
 Proceedings of the 8th International Conference Topological algebras and their Applications, 2014},
 Ed. by A. Katz, De Gruyter Proc. Math., p. 175-192 (2018).


\bibitem{N}
 J.~Neveu,
 Bases math$\acute{e}$mathiques des probabilit$\acute{e}$s, 310 pp.,
 Masson et cie, Paris, (1964).


\bibitem{Ro$_1$}
 V.~A.~Rokhlin,
 On the fundamental ideas of measure theory,
 {\it Mat. Sb.}, vol.~25(67), no.~1, pp. 107-150, (1949),(Russian).
 AMS Transl. vol.~71 (1952).


\bibitem{Ro$_2$}
 V.~A.~Rokhlin,
 Metric classification of measurable functions, {\it Uspehi Mat. Nauk}, vol.~12, no.~2(74), pp. 169–174(1957),(Russian).


\bibitem{Rol}
 S.~Rolewicz,.
 Metric linear spaces, 470 p.,
 PWM, Warsaw, (1972).


\bibitem{RuGrMuPa}
 B.-Z.~A.~Rubshtein, G.~Ya.~Grabarnik, M.~A.~Muratov, Yu.~S.~Pashkova,
 Foundations of Symmetric spaces of measurable functions, 259 pp.,
 Development in Mathematics 45, Springer, (2016).


\bibitem{Ry}
 J.~V.~Ryff,
 Measure preserving transformation and rearrangements,
 {\it J. Math. Anal. Appl.}, vol.~31, pp. 449-458 (1970).

 \bibitem{St}
 M.~H.~Stone, The theory of representations of Boolean algebras,
 {\it Trans. AMS}, vol.~40, pp. 37-111 (1936).


\bibitem{ViRuFe}
 V.~G.~Vinikurov, B.~A.~Rubstein, A.~L.~Fedirkv,
 The Lebesgue space and its measurable partitions,
 TashGu, Tashkent, (1986). (In Russian)



\end{thebibliography}
\end{document}